\documentclass[oneside,english,final]{elsarticle}
\usepackage{mathrsfs}
\usepackage{amsthm}
\usepackage{amssymb}
\usepackage{amsmath}

\usepackage{listings}
\usepackage{matlab-prettifier} 

\makeatletter
\numberwithin{equation}{section}
\numberwithin{figure}{section}
\theoremstyle{plain}
\newtheorem{thm}{\protect\theoremname}
\theoremstyle{definition}
\newtheorem{defn}[thm]{\protect\definitionname}
\theoremstyle{plain}
\newtheorem{lem}[thm]{\protect\lemmaname}
\theoremstyle{plain}
\newtheorem{cor}[thm]{\protect\corollaryname}
\theoremstyle{plain}
\newtheorem{rmk}[thm]{\protect\remarkname}
\theoremstyle{plain}
\newtheorem{example}[thm]{\protect\examplename}

\usepackage{stmaryrd}
\usepackage{mathrsfs} 
\newcommand{\norm}[1]{ \interleave  #1 \, \interleave}
\usepackage{color}

\makeatother

\usepackage{babel}
\providecommand{\corollaryname}{Corollary}
\providecommand{\definitionname}{Definition}
\providecommand{\lemmaname}{Lemma}
\providecommand{\theoremname}{Theorem}
\providecommand{\remarkname}{Remark}
\providecommand{\examplename}{Example}

\newcommand{\V}{\mathcal{V}}
\newcommand{\X}{\mathcal{X}}
\newcommand{\F}{\mathbb{F}}
\newcommand{\C}{\mathbb{C}}
\newcommand{\R}{\mathbb{R}}

\makeatletter
\def\ps@pprintTitle{%
 \let\@oddhead\@empty
 \let\@evenhead\@empty
 \let\@oddfoot\@empty
 \let\@evenfoot\@empty
}
\makeatother

\begin{document}

\begin{frontmatter}
\title{Matrix best approximation in the spectral norm}

\author[1]{Vance Faber} 
\author[2]{J{\"o}rg Liesen\corref{cor1}}\ead{liesen@math.tu-berlin.de}
\author[3]{Petr Tich\'y}\ead{petr.tichy@mff.cuni.cz}

\cortext[cor1]{Corresponding author}
\address[1]{Hoquiam, WA, USA}
\address[2]{Institute of Mathematics, TU Berlin, Stra{\ss}e des 17. Juni 136, 10623 Berlin, Germany}
\address[3]{Faculty of Mathematics and Physics, Charles University, Sokolovsk{\'a} 83, 18675 Prague, Czech Republic}

\begin{abstract}
We derive, similar to Lau and Riha in~\cite{LaRi1981}, a matrix formulation of a general best approximation theorem of Singer for the special case of spectral approximations of a given matrix from a given subspace. Using our matrix formulation we describe the relation of the spectral approximation problem to semidefinite programming, and we present a simple MATLAB code to solve the problem numerically. We then obtain geometric characterizations of spectral approximations that are based on the $k$-dimensional field of $k$ matrices, which we illustrate with several numerical examples. The general spectral approximation problem is a min-max problem, whose value is bounded from below by the corresponding max-min problem. Using our geometric characterizations of spectral approximations, we derive several necessary and sufficient as well as sufficient conditions for equality of the max-min and min-max values. Finally, we prove that the max-min and min-max values are always equal when we ``double'' the problem. Several results in this paper generalize results that have been obtained in the convergence analysis of the GMRES method for solving linear algebraic systems. 
\end{abstract}

\begin{keyword}
matrix approximation problems \sep best approximation problems \sep iterative methods \sep field of values \sep convergence analysis \sep ideal GMRES \sep worst-case GMRES  
\MSC[2020] 41A50\sep 49K35\sep 65F10 
\end{keyword}

\end{frontmatter}


\section{Introduction }

An important problem that occurs in numerous different contexts of mathematics and its applications is the best approximation of a given element in a normed vector space by elements of a given subspace. To formulate this problem precisely, let $\V$ be a real or complex vector space endowed with the norm $\|\cdot\|$, and let $\X\subset\V$ be a subspace. Given a vector $y\in\V\setminus\X$, we then look for a vector $x_*\in\X$ that satisfies
\begin{equation}\label{eqn:gen-approx}
\|y-x_*\|=\inf_{x\in\X} \|y-x\|.
\end{equation}
Such a vector $x_*$ is called a \emph{best approximation} of $y\in\V$ from the subspace $\X$ with respect to the given norm. The general theory of best approximation in normed vector spaces is described, for example, in Singer's classical monograph published in Springer's \emph{Grundlehren Series} in 1970~\cite{Sin70}; see also his follow-up monograph from 1974~\cite{Sin74}.  

In the area of numerical linear algebra, or matrix computations, the vector space of interest usually is the $n^2$-dimensional matrix space $\V=\F^{n\times n}$, where $\F\in\{\R,\C\}$, and $\X=\mathrm{span}\{X_1,\dots,X_k\}\subset\F^{n\times n}$ is a given $k$-dimensional subspace. Because of the finite-dimensionality, the infimum in \eqref{eqn:gen-approx} can be replaced by the minimum, i.e., the best approximation problem has at least one solution.

A common norm, which we also consider in this paper, is the \emph{spectral norm}, or \emph{matrix 2-norm}, defined by
\begin{equation}\label{eqn:spectral}
\|A\|_2\equiv \max_{\substack{v\in \F^n\\ \|v\|_2=1}}\|Av\|_2,    
\end{equation}
where $\|v\|_2=(v^Hv)^{1/2}$ is the Euclidean norm on $\F^n$. A best approximation  $X_*\in\X$ of $Y\in\V\setminus\X$ with respect to the spectral norm is also called a \emph{spectral approximation} of $Y$ from the subspace $\X$. We point out that in general the spectral approximation of a matrix from a subspace need not be unique, since the spectral norm is not strictly convex. Further general details about this can be found, e.g., in~\cite{Zi1993,Zi1995}; see also Example~\ref{ex:non-unique} below. 

Frequently studied special cases of \eqref{eqn:gen-approx} for a given matrix $A\in\F^{n\times n}$ with minimal polynomial degree larger than~$k$ are the \emph{ideal GMRES problem}
\begin{align}
\|I-X_*\|_2&=\min_{X\in\X} \|I-X\|_2\quad\mbox{with}\quad \X=\mathrm{span}\{A,\dots, A^k\},\label{eqn:iG}
\end{align}
and the \emph{ideal Arnoldi problem}
\begin{align}\label{eqn:iA}
\|A^k-X_*\|_2 &=\min_{X\in\X} \|A^k-X\|_2\quad\mbox{with}\quad \X=\mathrm{span}\{I,\dots, A^{k-1}\}. 
\end{align}
These problems were introduced by Greenbaum and Trefethen in~\cite{GrTr1994}, and they have been considered, e.g., in~\cite{ChoGre15,FaJoKnMa1996,FabLieTic09,FabLieTic13,LieTic14,TiLiFa2007,To1997}. As shown in~\cite{GrTr1994}, both problems have a unique solution if $A$ is nonsingular and the optimal value is positive. These uniqueness results were extended to more general classes of best approximation problems in the spectral norm in~\cite{LiTi2009}. The estimation of the values of the two problems is closely related to the \emph{polynomial numerical hull} of matrices, originally defined by Nevanlinna in 1993~\cite[p.~42]{Nev93}, and later studied, e.g., in~\cite{BurGre06,DavLiSal08,FabGreMar03,Gre02,Gre03}. This in turn is relevant in the analysis of spectral sets and the famously still open Crouzeix Conjecture; see~\cite{Cro04,CroPal17,Gr2023} as well as the recent paper~\cite{CheGreTro25} and the references therein. 

Naturally, any general characterization of best approximations from a subspace, i.e., of solutions of the problem \eqref{eqn:gen-approx}, applies also to the special case $\V=\F^{n\times n}$ and the spectral norm. An interesting special case that is particularly useful for matrix computations has been studied by Lau and Riha in~\cite{LaRi1981}. They worked out a matrix formulation of a general best characterization theorem given by Singer in~\cite[pp.~170--171]{Sin70} (see also~\cite[pp.~7--8]{Sin74}) for the spectral approximations of a given matrix $Y\in\R^{n\times n}$ from a given subspace $\X\subset\R^{n\times n}$. This matrix formulation is based on characterizing extremal points on the unit sphere of the dual space $\V^*$. 

This paper is organized as follows. We first derive, in Section~\ref{sec:singer}, a matrix formulation of Singer's general theorem for the special case of spectral approximations of a given matrix from a given subspace. Our approach is similar to the one of Lau and Riha in~\cite{LaRi1981}, but we consider subspaces $\X\subset\F^{n\times n}$ with $\F\in\{\R,\C\}$. We also avoid the Lemma in~\cite[p.~113]{LaRi1981} (which, as pointed out by Zi{\c{e}}tak~\cite{Zi1988}, is not correct) and thus modify the proofs of ~\cite[Theorems~1 and~2]{LaRi1981}. Based on our matrix formulation we then describe the relation of the spectral approximation problem and semidefinite programming, and we present a simple MATLAB code that uses the CVX toolbox \cite{cvx} to solve the problem numerically. Using our matrix formulation we next obtain, in Section~\ref{sec:geometry}, geometric characterizations of spectral approximations that are based on the $k$-dimensional field of $k$ matrices; see Definition~\ref{def:FofM}. We illustrate our characterizations with several numerical examples. The general spectral approximation problem is a min-max problem, whose value is bounded from below by the corresponding max-min problem; see the inequality in \eqref{eqn:maxmin-minmax}. Using our geometric characterizations of spectral approximations we derive, in Section~\ref{sec:minmax}, several necessary and sufficient as well as sufficient conditions for equality of the max-min and min-max values. Our conditions generalize several known results that have been obtained in the analysis of the worst-case behavior of the GMRES method. Finally, we prove that the max-min and min-max values are always equal when we ``double'' the problem. 

\section{A matrix formulation of Singer's theorem}\label{sec:singer}

We start with some notation required to formulate Singer's theorem.

\begin{defn}\label{def1}
If $\V$ is a vector space over a field $\F$, where $\F\in\{\R,\C\}$, endowed with the norm $\|\cdot\|$, the set $\Omega\equiv \{v\,:\, v\in\V,\|v\|=1\}$ is called the \emph{unit sphere} in $\V$. Let $\V^*$ be the dual space of $\V$, i.e., the space of all linear functionals $f:\V\to\F$. We denote the \emph{dual norm} of $\|\cdot\|$ on $\V^*$ by
$$\norm{f}\equiv\sup_{v\in\Omega}|f(v)|,$$
and the unit sphere in $\V^*$ by $\Omega^*\equiv \{f\,:\, f\in\V^{*}, \norm{f}=1\}$. A functional $f\in\Omega^*$ is called \emph{extremal}, if $f=\frac12 (f_1+f_2)$ with $f_1,f_2\in\Omega^*$ implies that $f=f_1=f_2$.
\end{defn}

The following theorem of Singer~\cite[pp. 170--171]{Sin70} (see also~\cite[Theorem~1.12]{Sin74}) characterizes best approximations of a given vector from a given subspace. Singer's original formulation contains four equivalent conditions for a best approximation. Here we present only the one that is most useful for our context (namely condition (3) in his theorem).

\begin{thm}\label{thm:singer}
Let $\V$ be a vector space over a field $\F$, where $\F\in\{\R,\C\}$, endowed with the norm $\|\cdot\|$, let $\X\subset\V$ be a $k$-dimensional subspace, and let $y\in\V\setminus\X$ and $x_*\in\X$ be given. Then the following are equivalent:
\begin{itemize}
    \item[$(1)$] The vector $x_*$ is a best approximation of $y$ from $\X$ with respect to the given norm, i.e., it satisfies \eqref{eqn:gen-approx}.
    \item[$(2)$] There exist $\ell$ extremal vectors $f_{1},\dots,f_{\ell}\in\Omega^{*}$, where $1\leq\ell\leq k+1$ if $\F=\R$ and $1\leq\ell\leq2k+1$ if $\F=\C$, and $\ell$ positive real numbers $\omega_{1},\dots,\omega_{\ell}$ with $\omega_{1}+\dots+\omega_{\ell}=1$, such that 
    \begin{align*}
    \sum_{j=1}^{\ell}\omega_{j}f_{j}(x) & = 0,\quad \mbox{for all $x\in\X$, and }\\
    f_{j}(y-x_{*}) & = \|y-x_{*}\|,\quad \mbox{for $j=1,\dots,\ell$.}
    \end{align*}
\end{itemize}
\end{thm}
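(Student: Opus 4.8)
The plan is to prove the two implications separately, with $(2)\Rightarrow(1)$ a short direct estimate and $(1)\Rightarrow(2)$ carrying all the difficulty. For the hard direction I would combine a Hahn--Banach argument (to produce a single norming functional annihilating $\X$), an extreme-point decomposition (to replace it by a convex combination of \emph{extremal} functionals), and finally Carathéodory's theorem (to control the number $\ell$). The main obstacle is precisely this last refinement: upgrading a lone norming functional to a short convex combination of genuine extreme points of the dual unit ball with the sharp count $k+1$ versus $2k+1$.

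For $(2)\Rightarrow(1)$, take any $x\in\X$. Since $x-x_*\in\X$, the first condition gives $\sum_j\omega_j f_j(x-x_*)=0$, and therefore
\[
\|y-x_*\| = \sum_{j}\omega_j f_j(y-x_*) = \sum_j\omega_j f_j(y-x) \le \sum_j\omega_j\norm{f_j}\,\|y-x\| = \|y-x\|,
\]
using the second condition, $\omega_j>0$ with $\sum_j\omega_j=1$, the bound $|f_j(y-x)|\le\norm{f_j}\|y-x\|$, and $\norm{f_j}=1$. As $x$ was arbitrary, $x_*$ is a best approximation; note this direction does not use extremality.

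For $(1)\Rightarrow(2)$, I would first establish a single-functional optimality certificate. On the subspace $\X\oplus\mathrm{span}\{y-x_*\}$ (a genuine direct sum since $y\notin\X$) define $g(x+\alpha(y-x_*))=\alpha\|y-x_*\|$; optimality of $x_*$ yields $|g(\cdot)|\le\|\cdot\|$ there, so $\norm{g}=1$, and Hahn--Banach extends $g$ to some $f\in\Omega^*$ with $f|_\X=0$ and $f(y-x_*)=\|y-x_*\|$. Consequently the convex, weak-$*$ compact (Banach--Alaoglu) set of norming functionals $N=\{f\in\V^*:\norm{f}\le 1,\ f(y-x_*)=\|y-x_*\|\}$ meets $\ker R$, where $R:\V^*\to\X^*$ is the restriction map $f\mapsto f|_\X$; equivalently $0\in R(N)$.

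Now comes the refinement, which is where I expect the work to lie. The set $N$ is a face of the closed unit ball $B^*$, so every extreme point of $N$ is an extreme point of $B^*$, i.e.\ an extremal functional in the sense of Definition~\ref{def1}. The image $R(N)$ is a compact convex subset of $\X^*$, whose real dimension is $k$ if $\F=\R$ and $2k$ if $\F=\C$. Since $0\in R(N)$, Krein--Milman together with Carathéodory's theorem write $0=\sum_{j=1}^{\ell}\omega_j p_j$ as a convex combination of at most $\dim_\R\X^*+1$ extreme points $p_j$ of $R(N)$ with $\omega_j>0$. For each such $p_j$, the set $R^{-1}(p_j)\cap N$ is a nonempty compact convex face of $N$, hence contains an extreme point $f_j$ of $N$, which is therefore extremal and satisfies $R(f_j)=p_j$. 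Then $\sum_j\omega_j f_j(x)=0$ for all $x\in\X$, while $f_j\in N$ gives $f_j(y-x_*)=\|y-x_*\|$, and $\ell\le k+1$ (resp.\ $2k+1$), exactly as claimed.
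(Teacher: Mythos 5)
Your argument is correct and complete: the easy direction is the standard averaging estimate, and the hard direction correctly combines the Hahn--Banach norming functional vanishing on $\X$, the observation that the set $N$ of norming functionals is a weak-$*$ compact face of the dual unit ball, and Minkowski--Carath\'eodory applied to the image of $N$ under restriction to $\X^*$ (of real dimension $k$ or $2k$) to obtain the count $\ell\leq k+1$, resp.\ $2k+1$. Note, however, that the paper does not prove this statement at all --- it is quoted as Singer's theorem with a citation to \cite[pp.~170--171]{Sin70} --- so there is no in-paper proof to compare against; your proof is essentially the classical argument found in Singer's monograph.
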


Note that if in condition (2) we define the functional $f\equiv \sum_{j=1}^\ell \omega_j f_j\in\V^*$, which is a convex combination of the extremal vectors $f_1,\dots,f_\ell\in\Omega^*$, then $f(y-x_*)=\|y-x_*\|$.

We now want to find a formulation of Theorem~\ref{thm:singer} for the case of the vector space $\V=\F^{n\times n}$, where $\F\in\{\R,\C\}$, with the spectral norm $\|\cdot\|_2$ (see \eqref{eqn:spectral}), and a given $k$-dimensional subspace $\X=\mathrm{span}\{X_1,\dots,X_k\}\subset\V$. Thus, for a given matrix $Y\in\V\setminus\X$ we are interested in characterizing its spectral approximations $X_*$ from the subspace $\X$, i.e., the solutions $X_*\in\X$ of the best approximation problem
\begin{equation}\label{eqn:main-prob}
\|Y-X_*\|_2=\min_{X\in\X}\|Y-X\|_2=\min_{X\in\mathcal{X}}\max_{\substack{v\in\F^n\\\|v\|_2=1}}\|(Y-X)v\|_2.
\end{equation}

In order to do so, we will determine $\norm{\cdot}_2$, the dual norm of $\|\cdot\|_2$ on $\V^*=(\F^{n\times n})^*$, and the extremal vectors in $\Omega^*\subset\V^*$.

Let $f\in\V^*$ be given. Using the standard basis $\{E_{ij}=e_ie_j^T\,:\,i,j=1,\dots,n\}$ of $\V$, every matrix $A=[a_{ij}]\in \V$ can be written as $\sum_{i,j=1}^n a_{ij}E_{ij}$. Thus,
$$f(A)=f\Big(\sum_{i,j=1}^n a_{ij}E_{ij}\Big)=\sum_{i,j=1}^n a_{ij}f(E_{ij})=\mathrm{trace}(F^{H}A),$$
where 
$$F=[f_{ij}]\in\V\quad\mbox{with}\quad f_{ij}\equiv \overline{f(E_{ij})}.$$ 
We call the uniquely determined matrix $F\in\V$ the \emph{standard matrix representation} of the linear functional $f\in\V^*$. It is well known that the map $\V^*\to \V$, $f\mapsto F$, is a vector space isomorphism.

\begin{rmk}
Note that, in general, a matrix representation of a linear functional $f:\V\rightarrow\F$ depends on the chosen bases of $\V$ and $\F$. Our choice of the two bases, namely the standard basis of $\V$ and the basis $\{1\}$ of $\F$, yields the unique matrix representation $F$ satisfying $f(A)=\mathrm{trace}(F^HA)$ for each $A\in\V$. This feature is convenient for the derivation of the dual norm $\norm{\cdot}_2$ on $\V^*$.
\end{rmk}

If we denote
$$\langle F,A\rangle\equiv \mathrm{trace}(F^{H}A),$$
then the dual norm of $\|\cdot\|_2$ on $\V^*$ is given by (cf. Definition~\ref{def1})
\begin{equation}\label{eqn:dual}
\norm{f}_2=\max_{A\in\Omega}|f(A)|=\max_{A\in\Omega}|\langle F,A\rangle|.
\end{equation}
Suppose that $\mathrm{rank}(F)=r\leq n$, and let 
\begin{equation}\label{eqn:svd}
F=U_r\Sigma_r V_r^H= \sum_{j=1}^r \sigma_j u_jv_j^H,\quad \sigma_1\geq\cdots\geq\sigma_r>0,
\end{equation}
be a singular value decomposition of $F$. Then for any $A=[a_{ij}]\in \V$ we have 
\begin{align}\label{eqn:ident}
\langle F,A\rangle &=
\mathrm{trace}(V_r\Sigma_r U_r^HA)=
\mathrm{trace}(\Sigma_r U_r^HAV_r)=\sum_{j=1}^r \sigma_j u_j^H Av_j.
\end{align}
We use this identity to prove the following result; cf.~\cite[Theorem~1]{LaRi1981}.

\begin{lem}
Consider the vector space $\F^{n\times n}$ with the spectral norm $\|\cdot\|_2$. If $f\in(\F^{n\times n})^*$ has the standard matrix representation $F\in\F^{n\times n}$ with the singular value decomposition \eqref{eqn:svd}, then 
$$\norm{f}_2=\sum_{j=1}^r\sigma_j.$$
Thus, $\norm{f}_2$ is equal to the Schatten 1-norm (or nuclear norm, trace norm) of $F$, and we write $\norm{F}_2\equiv \norm{f}_2$.
\end{lem}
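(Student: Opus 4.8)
The plan is to compute the dual norm $\norm{f}_2=\max_{A\in\Omega}|\langle F,A\rangle|$ directly, using the singular value decomposition of $F$ together with the identity \eqref{eqn:ident}. I would first establish the upper bound $\norm{f}_2\leq\sum_{j=1}^r\sigma_j$, and then exhibit an explicit matrix $A\in\Omega$ (that is, with $\|A\|_2=1$) attaining this value, which gives the matching lower bound.

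For the upper bound, I would start from $\langle F,A\rangle=\sum_{j=1}^r\sigma_j u_j^H A v_j$ as in \eqref{eqn:ident}. Since $\|A\|_2=1$ and the $u_j,v_j$ are unit vectors, each term satisfies $|u_j^H A v_j|\leq\|A\|_2\|u_j\|_2\|v_j\|_2=1$ by Cauchy--Schwarz and the definition of the spectral norm. Because the $\sigma_j$ are positive, the triangle inequality then yields $|\langle F,A\rangle|\leq\sum_{j=1}^r\sigma_j|u_j^H A v_j|\leq\sum_{j=1}^r\sigma_j$ for every $A\in\Omega$, hence $\norm{f}_2\leq\sum_{j=1}^r\sigma_j$. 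This step is routine.

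For the lower bound, the natural candidate is $A\equiv U_r V_r^H=\sum_{j=1}^r u_j v_j^H$, which is a partial isometry and therefore has spectral norm exactly $1$ (so $A\in\Omega$), since the $u_j$ are orthonormal and the $v_j$ are orthonormal. I would then compute $u_i^H A v_j=u_i^H\big(\sum_{\ell}u_\ell v_\ell^H\big)v_j=\delta_{ij}$ using orthonormality, so that $\langle F,A\rangle=\sum_{j=1}^r\sigma_j u_j^H A v_j=\sum_{j=1}^r\sigma_j$. This shows the supremum is attained and equals $\sum_{j=1}^r\sigma_j$, completing the computation of the dual norm.

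The main point to get right is the verification that the extremizing matrix $A=U_r V_r^H$ genuinely lies on the unit sphere $\Omega$; everything else is a short calculation. One must be careful that when $r<n$ this $A$ is only a partial isometry rather than a unitary, but its nonzero singular values are all equal to $1$, so $\|A\|_2=1$ still holds. Finally, recognizing $\sum_{j=1}^r\sigma_j$ as the Schatten $1$-norm (nuclear norm) of $F$ is immediate from the definition of that norm as the sum of singular values, which justifies the notation $\norm{F}_2\equiv\norm{f}_2$ in the statement.
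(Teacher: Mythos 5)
Your proof is correct and follows essentially the same route as the paper: the upper bound via the identity \eqref{eqn:ident} together with Cauchy--Schwarz and the triangle inequality, and the lower bound by evaluating $\langle F,A\rangle$ at the partial isometry $A=U_rV_r^H\in\Omega$. Your explicit check that $\|U_rV_r^H\|_2=1$ is a small point the paper leaves implicit, but otherwise the arguments coincide.
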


\begin{proof}
The definition of the dual norm in \eqref{eqn:dual}, the identity \eqref{eqn:ident}, and the triangle and Cauchy-Schwarz inequalities yield
$$\norm{f}_2\leq \sum_{j=1}^r \sigma_j \max_{A\in\Omega} |u_j^H Av_j|\leq 
\sum_{j=1}^r \sigma_j \max_{A\in\Omega} \|Av_j\|_2 =\sum_{j=1}^r \sigma_j.$$
On the other hand, for the matrix $A=U_rV_r^H\in\Omega$ we obtain
$$\norm{f}_2\geq |\langle F,A\rangle|=\mathrm{trace}(V_r\Sigma_r U_r^HU_r V_r^H)=\mathrm{trace}(\Sigma_r)=\sum_{j=1}^r \sigma_j.$$
\end{proof}

Note that any matrix $F\in\F^{n\times n}$ with a singular value decomposition of the form \eqref{eqn:svd} satisfies 
$$\sigma_1=\|F\|_2\leq \norm{F}_2=\sum_{j=1}^r \sigma_j,$$
which shows that $\|F\|_2=\norm{F}_2$ holds if and only if $\mathrm{rank}(F)=1$. 
The next lemma is~\cite[Theorem~2]{LaRi1981}, here including also the complex case and with a modified proof.

\begin{lem}
Consider the vector space $\F^{n\times n}$ with the spectral norm $\|\cdot\|_2$ and the corresponding unit sphere $\Omega$. A functional $f\in\Omega^*$ is extremal if and only if its standard matrix representation $F\in\F^{n\times n}$ is of the form $F=uv^H$ with unit Euclidean norm vectors $u,v\in\F^n$.     
\end{lem}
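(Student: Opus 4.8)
The plan is to transport everything through the linear, norm-preserving isomorphism $f\mapsto F$ and reduce the claim to a statement about the nuclear norm. By the previous two lemmas, $\norm{f}_2=\norm{F}_2=\sum_{j=1}^r\sigma_j$ and $\|F\|_2=\norm{F}_2$ holds exactly when $\mathrm{rank}(F)=1$. Since the isomorphism is linear, a splitting $f=\tfrac12(f_1+f_2)$ corresponds to $F=\tfrac12(F_1+F_2)$, and $f,f_1,f_2\in\Omega^*$ corresponds to $\norm{F}_2=\norm{F_1}_2=\norm{F_2}_2=1$. So the assertion becomes: $F$ is an extreme point of the nuclear-norm unit sphere if and only if $F=uv^H$ with $\|u\|_2=\|v\|_2=1$, equivalently $\norm{F}_2=1$ and $\mathrm{rank}(F)=1$.

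For the direction that a higher-rank $F$ fails to be extremal, I would perturb the singular values. Writing $F=\sum_{j=1}^r\sigma_ju_jv_j^H$ as in \eqref{eqn:svd} with $r\geq 2$ and $\sum_j\sigma_j=1$, I set $F_{1,2}=\sum_j(\sigma_j\pm\epsilon_j)u_jv_j^H$ with $\sum_j\epsilon_j=0$, $\epsilon\neq 0$, and $\sigma_j\pm\epsilon_j\geq 0$; concretely one may take $\epsilon_1=-\epsilon_2=\sigma_2/2$ and the remaining $\epsilon_j=0$. Because the $u_j$ are orthonormal and the $v_j$ are orthonormal, these expressions are singular value decompositions, so $\norm{F_1}_2=\norm{F_2}_2=\sum_j\sigma_j=1$, while $F=\tfrac12(F_1+F_2)$ and $F_1\neq F_2$. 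Hence $f$ is not extremal, and extremality forces $\mathrm{rank}(F)=1$, i.e. $F=\sigma_1u_1v_1^H=u_1v_1^H$.

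For the converse, suppose $F=uv^H$ with $\|u\|_2=\|v\|_2=1$, so $\norm{F}_2=1$, and assume $F=\tfrac12(F_1+F_2)$ with $\norm{F_1}_2=\norm{F_2}_2=1$. The key device is the dual certificate $A=uv^H\in\Omega$, for which $\langle F,A\rangle=\mathrm{trace}(vv^H)=1$. Since $|\langle F_i,A\rangle|\leq\norm{F_i}_2\|A\|_2=1$ and $\tfrac12(\langle F_1,A\rangle+\langle F_2,A\rangle)=1$, both values $\langle F_i,A\rangle$ must equal $1$. Unwinding $\langle F_i,A\rangle=\overline{u^HF_iv}=1$ and chaining with $|u^HF_iv|\leq\|F_i\|_2\leq\norm{F_i}_2=1$ forces $\|F_i\|_2=\norm{F_i}_2=1$, so by the rank-one criterion $F_i=u_iv_i^H$ with unit $u_i,v_i$. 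Then $(u^Hu_i)(v_i^Hv)=1$ together with the Cauchy--Schwarz equality case pins down the phases, giving $F_i=uv^H=F$, hence $F_1=F_2=F$.

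The main obstacle I anticipate is precisely the equality analysis in this converse direction: I must drive the two chained inequalities, the trace/duality bound $|\langle F_i,A\rangle|\leq\norm{F_i}_2$ and the spectral bound $|u^HF_iv|\leq\|F_i\|_2$, to their simultaneous equality cases, invoke the rank-one criterion $\|F_i\|_2=\norm{F_i}_2$, and then carefully track the complex unimodular factors in the Cauchy--Schwarz equalities so as to conclude $F_i=uv^H$ rather than merely a phase multiple of it. By contrast, the perturbation direction is a one-line construction once the singular value decomposition is available.
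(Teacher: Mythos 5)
Your proposal is correct and follows essentially the same route as the paper: the same singular-value perturbation for the rank $\geq 2$ direction, and the same chain of forced equalities ($\|F_i\|_2=\norm{F_i}_2=1$, hence $\mathrm{rank}(F_i)=1$, then the Cauchy--Schwarz phase analysis) for the rank-one direction. The only cosmetic difference is that you obtain $\|F_i\|_2=1$ via the dual certificate $A=uv^H$ where the paper uses the triangle inequality for the spectral norm, but both arguments rest on the same facts and converge immediately.
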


\begin{proof}
We start analogously to the proof in~\cite[p.~115]{LaRi1981}. Suppose that $f\in\Omega^*$ has the standard matrix representation $F\in\F^{n\times n}$ with $\mathrm{rank}(F)=r\geq 2$. We will show that $f$ is then not extremal. We have $1=\norm{f}_2=\norm{F}_2$, and hence $F$ has a singular value decomposition of the form
$$F=\sum_{j=1}^r \sigma_j u_jv_j^H,\quad \mbox{where}\quad \sum_{j=1}^r \sigma_j=1.$$
In particular, we must have $0<\sigma_2\leq \sigma_1<1$. 
Let $\epsilon>0$ be such that $\sigma_1+\epsilon<1$ and $\sigma_2-\epsilon>0$, and define the matrices
\begin{align*}
F_1 &\equiv (\sigma_1+\epsilon)u_1v_1^H+(\sigma_2-\epsilon)u_2v_2^H+\sum_{j=3}^r\sigma_j u_jv_j^H,\\
F_2 &\equiv (\sigma_1-\epsilon)u_1v_1^H+(\sigma_2+\epsilon)u_2v_2^H+\sum_{j=3}^r\sigma_j u_jv_j^H.
\end{align*}
Then $\norm{F_1}_2=\norm{F_2}_2=1$, and $F=\frac12(F_1+F_2)$, while $F_1\neq F_2$. 
If $f_1,f_2\in\Omega^*$ are the uniquely determined linear functionals corresponding to the matrices $F_1,F_2$, respectively, then $f=\frac12(f_1+f_2)$, while $f_1\neq f_2$, which shows that $f$ is not extremal.

Now let $f\in\Omega^*$ have the standard matrix representation $F\in\F^{n\times n}$ with $\mathrm{rank}(F)=1$. Then $\norm{f}_2=\norm{F}_2=\|F\|_2=1$, and hence $F=uv^H$ with $\|u\|_2=\|v\|_2=1$. We will show that $f$ is extremal, but we proceed differently than in~\cite{LaRi1981}.

Let $f_1,f_2\in\Omega^*$ be such that $f=\frac12(f_1+f_2)$, then the corresponding standard matrix representations satisfy $F=\frac12(F_1+F_2)$. Since $\norm{F_1}_2=\norm{F_2}_2=1$, we have $\|F_1\|_2\leq 1$ and $\|F_2\|_2\leq 1$, so that
$$1=\norm{F}_2=\|F\|_2=\frac12 \|F_1+F_2\|_2\leq \frac12 (\|F_1\|_2+\|F_2\|_2)\leq 1,$$
which implies $\|F_1\|_2=\|F_2\|_2=1$. 

We now consider one of the matrices $F_j$, $j=1,2$. From $\norm{F_j}_2=\|F_j\|_2=1$ we see that $\mathrm{rank}(F_j)=1$, and hence $F_j=u_jv_j^H$ with $\|u_j\|_2=\|v_j\|_2=1$. Since $\|F_j\|_2=1$, we have $|u^HF_jv|\leq 1$. The equalities $F=\frac12(F_1+F_2)$ and $F=uv^H$ imply $2=u^HF_1v+u^HF_2v$, and combining this with $|u^HF_jv|\leq 1$ yields 
$$1=u^HF_jv=(u^Hu_j)(v_j^Hv),$$
from which we obtain $u^Hu_j=e^{i\alpha_j}$ and $v_j^Hv=e^{-i\alpha_j}$ for some $\alpha_j\in [0,2\pi)$. Since $1=|u^Hu_j|=\|u\|_2\|u_j\|_2$, we must have $u_j=\beta_j u$ with $|\beta_j|=1$, and thus $\beta_j=e^{i\alpha_j}$, i.e., $u_j=e^{i\alpha_j}u$. Analogously we get $v_j=e^{i\alpha_j}v$, and hence
$$F_j=u_jv_j^H=(e^{i\alpha_j} u)(e^{i\alpha_j} v)^H=uv^H=F, \quad j=1,2.$$
This yields $f_1=f_2=f$, so that $f$ is indeed extremal.
\end{proof}

If $f\in\Omega^*$ is extremal, and $F=uv^H$ is its standard matrix representation, then for every $A\in\F^{n\times n}$ we have
$$\langle F,A\rangle =\mathrm{trace}(uv^HA)=\mathrm{trace}(v^HAu)=v^HAu.$$
We use this observation in condition (2) of our following matrix-version of Theorem~\ref{thm:singer}; cf.~\cite[Theorem~3]{LaRi1981}.

\begin{thm}\label{thm:singer-mat1}
Consider a $k$-dimensional subspace $\X\subset \F^{n\times n}$, and let $Y\in\F^{n\times n}\setminus\X$ and $X_*\in\X$ be given. Then the following are equivalent:
\begin{itemize}
    \item[$(1)$] The matrix $X_*$ is a spectral approximation of $Y$ from the subspace $\X$, i.e., it solves the problem \eqref{eqn:main-prob}.
    \item[$(2)$] There exist $\ell$ rank-one matrices $u_1v_1^H,\dots,u_\ell v_\ell^H\in\F^{n\times n}$, with $\|u_i\|_2=\|v_i\|_2=1$ for $i=1,\dots,\ell$, where $1\leq\ell\leq k+1$ if $\F=\R$ and $1\leq\ell\leq2k+1$ if $\F=\C$, and $\ell$ positive real numbers $\omega_{1},\dots,\omega_{\ell}$ with $\omega_{1}+\dots+\omega_{\ell}=1$, such that 
    \begin{align*}
    \sum_{j=1}^{\ell}\omega_{j}u_j^H Xv_j & = 0,\quad \mbox{for all $X\in\X$, and }\\
    u_j^H(Y-X_*)v_j & = \|Y-X_{*}\|_2,\quad \mbox{for $j=1,\dots,\ell$.}
    \end{align*}
\end{itemize}
\end{thm}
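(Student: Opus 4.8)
The plan is to apply Singer's theorem (Theorem~\ref{thm:singer}) to the concrete setting $\V=\F^{n\times n}$ equipped with the spectral norm $\|\cdot\|_2$, and then to translate its condition~(2) into matrix language by means of the two preceding lemmas. Since the spectral norm plays the role of the abstract norm and $\X\subset\F^{n\times n}$ is the given $k$-dimensional subspace, Theorem~\ref{thm:singer} immediately yields that condition~(1) here, namely that $X_*$ solves \eqref{eqn:main-prob}, is equivalent to the existence of $\ell$ extremal functionals $f_1,\dots,f_\ell\in\Omega^*$, with $1\le\ell\le k+1$ if $\F=\R$ and $1\le\ell\le 2k+1$ if $\F=\C$, together with positive weights $\omega_1,\dots,\omega_\ell$ summing to $1$, such that $\sum_{j=1}^\ell\omega_j f_j(X)=0$ for all $X\in\X$ and $f_j(Y-X_*)=\|Y-X_*\|_2$ for each $j$. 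In particular the admissible ranges for $\ell$ are inherited verbatim from Singer's theorem and require no separate argument.

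The key step is to replace the abstract extremal functionals by rank-one matrices. Here I would invoke the lemma characterizing extremal functionals: the map $f\mapsto F$ sending a functional to its standard matrix representation is a vector-space isomorphism, and $f\in\Omega^*$ is extremal precisely when $F=uv^H$ for unit Euclidean-norm vectors $u,v\in\F^n$. Consequently, prescribing $\ell$ extremal functionals $f_1,\dots,f_\ell$ is the same as prescribing $\ell$ rank-one matrices $u_1v_1^H,\dots,u_\ell v_\ell^H$ with $\|u_i\|_2=\|v_i\|_2=1$. This bijection is exactly what permits passing back and forth between the abstract and the matrix formulations, and it must be used in both directions so that the two existence statements genuinely match.

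Finally I would rewrite the two scalar conditions using the identity recorded just before the statement, namely $f_j(A)=\langle F_j,A\rangle=u_j^H A v_j$ for the representation $F_j=u_jv_j^H$ and any $A\in\F^{n\times n}$. Under this substitution the orthogonality-type condition $\sum_{j=1}^\ell\omega_j f_j(X)=0$ for all $X\in\X$ turns into $\sum_{j=1}^\ell\omega_j u_j^H X v_j=0$ for all $X\in\X$, while the norm-attainment condition $f_j(Y-X_*)=\|Y-X_*\|_2$ turns into $u_j^H(Y-X_*)v_j=\|Y-X_*\|_2$. This is precisely condition~(2) of the present theorem, and the chain of equivalences is complete.

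I do not expect a deep obstacle: the argument is a faithful translation of Singer's abstract criterion through the dual-space identifications furnished by the two lemmas. The only points demanding care are that the extremal-functional bijection be applied in both directions, so that ``there exist $\ell$ extremal functionals'' and ``there exist $\ell$ rank-one matrices'' are logically interchangeable, and that the evaluation identity $f_j(A)=u_j^H A v_j$ be used with the ordering of $u_j$ and $v_j$ consistent with the form $u_j^H(\cdot)v_j$ appearing in the theorem's statement.
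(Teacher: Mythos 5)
Your proposal is correct and follows exactly the route the paper takes: the paper states Theorem~\ref{thm:singer-mat1} without a separate proof, treating it as the immediate consequence of specializing Theorem~\ref{thm:singer} to $(\F^{n\times n},\|\cdot\|_2)$, identifying extremal functionals with unit rank-one matrices via the two preceding lemmas, and substituting the evaluation identity for $\langle F_j,\cdot\rangle$. Your version of that identity, $f_j(A)=u_j^HAv_j$ for $F_j=u_jv_j^H$, is the one consistent with $\langle F,A\rangle=\mathrm{trace}(F^HA)$ and with the form of condition (2), so nothing is missing.
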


Further equivalent characterizations of spectral approximations are given in~\cite[Corollary~4.1]{Zi1993} and~\cite[p.~173]{Zi1996} (cf. also~\cite[Lemma~3.2]{LiTi2009} and~\cite[Section~7]{Zi2017}).

If we denote the rank-one matrices in condition (2) of Theorem~\ref{thm:singer-mat1} by $F_j\equiv u_jv_j^H$, $j=1,\dots,\ell$, and define $F\equiv \sum_{j=1}^\ell \omega_j F_j$, then $F\in\F^{n\times n}$ is a matrix with $\norm{F}_2=1$ and rank at most $\ell$ that satisfies
$$\langle F, Y-X_*\rangle = \|Y-X_*\|_2.$$
This observation is a variant of~\cite[Theorem~4.1]{Zi1993}. 

Next note that if
\[
u_j^{H}(Y-X_{*})v_j=\|Y-X_{*}\|_2
\]
holds for unit Euclidean norm vectors $u_j$ and $v_j$, then these vectors are left resp. right singular vectors corresponding to the maximal singular value of $Y-X_{*}$. We call such vectors \emph{maximal} left and right singular vectors, and we denote by $\Sigma_{Y-X_{*}}$ the span of the maximal right singular vectors of $Y-X_{*}$. We thus can formulate the following result.

\begin{cor} 
\label{cor:cond3}
In the notation of Theorem~\ref{thm:singer-mat1}, the matrix $X_*\in\X$ solves the problem \eqref{eqn:main-prob} if and only if there exist $\ell$ vectors $v_1,\dots,v_\ell\in\Sigma_{Y-X_*}$, where $1\leq\ell\leq k+1$ if $\F=\R$ and $1\leq\ell\leq2k+1$ if $\F=\C$, and $\ell$ positive real numbers $\omega_{1},\dots,\omega_{\ell}$ with $\omega_{1}+\dots+\omega_{\ell}=1$, such that 
\begin{equation}\label{eqn:cond3}
\sum_{j=1}^{\ell}\omega_{j}u_j^H Xv_j = 0,\quad \mbox{for all $X\in\X$,}
\end{equation}
where the $u_j$ are corresponding maximal left singular vectors, i.e., $(Y-X_*)v_j=\sigma_1 u_j$ for $j=1,\dots,\ell$.
\end{cor}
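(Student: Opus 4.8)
The plan is to obtain the corollary directly from Theorem~\ref{thm:singer-mat1} by reinterpreting its condition (2) in terms of singular vectors, using the observation recorded immediately before the corollary statement. Concretely, I would establish that the pair of conditions in (2) of Theorem~\ref{thm:singer-mat1}---namely $\sum_{j=1}^\ell\omega_j u_j^H X v_j=0$ for all $X\in\X$ together with $u_j^H(Y-X_*)v_j=\|Y-X_*\|_2$ for unit Euclidean norm vectors $u_j,v_j$---is equivalent to \eqref{eqn:cond3} together with the requirement that each $v_j$ lie in $\Sigma_{Y-X_*}$ and that $u_j$ be the corresponding maximal left singular vector via $(Y-X_*)v_j=\sigma_1 u_j$, where $\sigma_1=\|Y-X_*\|_2$. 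The bound $1\leq\ell\leq k+1$ (resp. $2k+1$) carries over verbatim in both directions.

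For the forward direction, suppose $X_*$ solves \eqref{eqn:main-prob}. Theorem~\ref{thm:singer-mat1} furnishes unit vectors $u_j,v_j$ and positive weights $\omega_j$ summing to one, with $1\leq\ell\leq k+1$ (resp. $2k+1$), satisfying the two listed properties. The first property is precisely \eqref{eqn:cond3}. For the second, the observation preceding the corollary shows that the equality $u_j^H(Y-X_*)v_j=\|Y-X_*\|_2$ forces $v_j$ to be a maximal right singular vector, hence $v_j\in\Sigma_{Y-X_*}$, and forces $u_j$ to be the corresponding maximal left singular vector with $(Y-X_*)v_j=\sigma_1 u_j$. This yields exactly the stated condition.

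For the converse, I would take unit vectors $v_1,\dots,v_\ell\in\Sigma_{Y-X_*}$ and positive weights $\omega_j$ summing to one and satisfying \eqref{eqn:cond3}, and define $u_j\equiv\sigma_1^{-1}(Y-X_*)v_j$. The key computation is that any unit vector in $\Sigma_{Y-X_*}$ satisfies $\|(Y-X_*)v_j\|_2=\sigma_1$: writing $v_j=\sum_i c_i v_i^{(\max)}$ in terms of orthonormal maximal right singular vectors, we have $(Y-X_*)v_j=\sigma_1\sum_i c_i u_i^{(\max)}$, whose Euclidean norm equals $\sigma_1$ because the maximal left singular vectors $u_i^{(\max)}$ are orthonormal and $\sum_i|c_i|^2=1$. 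Hence $\|u_j\|_2=1$, and $u_j^H(Y-X_*)v_j=\sigma_1\,u_j^H u_j=\sigma_1=\|Y-X_*\|_2$. Together with \eqref{eqn:cond3}, this reproduces condition (2) of Theorem~\ref{thm:singer-mat1}, so $X_*$ solves \eqref{eqn:main-prob}.

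Since both directions reduce to the already-established Theorem~\ref{thm:singer-mat1} and the SVD observation, I do not expect a serious obstacle; the only point requiring care is the normalization bookkeeping---verifying that a unit $v_j\in\Sigma_{Y-X_*}$ produces a unit $u_j$ (and conversely), which relies on the maximal singular value possibly having multiplicity greater than one, so that $\Sigma_{Y-X_*}$ is a genuine subspace rather than a single ray. This is exactly what the orthonormality argument above handles.
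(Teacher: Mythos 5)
Your proposal is correct and follows essentially the same route as the paper: the corollary is obtained from Theorem~\ref{thm:singer-mat1} by observing that the equality $u_j^H(Y-X_*)v_j=\|Y-X_*\|_2$ for unit vectors is equivalent to $v_j$ being a maximal right singular vector with $u_j=\sigma_1^{-1}(Y-X_*)v_j$ the corresponding maximal left singular vector. Your converse direction in fact supplies a detail the paper leaves implicit, namely that any unit vector in the span $\Sigma_{Y-X_*}$ is mapped by $Y-X_*$ to a vector of norm $\sigma_1$, so the reconstructed $u_j$ is indeed a unit vector.
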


The problem of minimizing the spectral norm of the matrix $Y-X$ also can be formulated as a semidefinite program; see \cite[p.~55]{VaBo1996} for the case $\F=\R$ and \cite{ToTr1998} for $\F=\C$.
The notion of duality discussed in this section via Singer's theorem
then coincides with semidefinite programming duality; see \cite[pp.~67--68]{VaBo1996} for $\F=\R$. Based on the results of \cite{ToTr1998},
extending the dual problem formulation to $\mathbb{F}=\mathbb{C}$
would be straightforward. Since the optimal duality gap is zero for this
class of semidefinite programs, it becomes possible to investigate
the structure of the Hermitian positive semidefinite matrices in $\mathbb{F}^{2n\times2n}$
that solve the semidefinite dual problem. In particular, this structure can be described in terms of the maximal right and left singular vectors of $Y-X_{*}$ and the positive scalars $\omega_{i}$ introduced above. This would offer yet another equivalent characterization of a spectral approximation $X_{*}$ from the semidefinite programming perspective. 
The precise formulation of this characterization lies beyond the scope of this paper, but we will mention some details for the case $\F=\R$.

In the notation of Corollary~\ref{cor:cond3} we can define the matrices 
$$U\equiv [u_1,\dots,u_\ell],\quad V\equiv [v_1,\dots,v_\ell],\quad
\Lambda\equiv \frac12 {\rm diag}(\omega_1,\dots,\omega_\ell),$$
and
$$
Z\equiv \begin{bmatrix}
    U\Lambda U^T & U\Lambda V^T\\ V\Lambda U^T & V\Lambda V^T
\end{bmatrix}\equiv \begin{bmatrix} C & B^T\\ B & D\end{bmatrix}.
$$
Then $Z\in\R^{2n\times 2n}$ is a symmetric positive semidefinite matrix with $\mathrm{trace}(Z)=1$, and according to \eqref{eqn:cond3}  we have
$$
\mathrm{trace}(BX_i)=\mathrm{trace}(\Lambda U^TX_iV)=\sum_{j=1}^\ell\omega_j u_j^TX_i v_j=0, \quad i=1,\dots,k.
$$
Moreover,
\begin{align*}
\mathrm{trace}(BY) &=
\mathrm{trace}(B(Y-X_*)) = 
\mathrm{trace}(\Lambda U^T(Y-X_*)V) = 
\frac{\sigma_1}{2},
\end{align*}
so that $Z$ solves the dual problem 
\begin{equation}
\max2\,\mathrm{trace}\left(BY\right)\quad\mbox{s.t.}\quad Z^{T}=Z\equiv\left[\begin{array}{cc}
C & B^{T}\\
B & D
\end{array}\right]\geq 0,\label{eq:dual}
\end{equation}
\[
0=\mathrm{trace}\left(BX_{i}\right),\ i=1,\dots,k,\quad1=\mathrm{trace}(Z);
\]
see \cite[p.~66]{VaBo1996} for the formulation of the dual semidefinite program to \eqref{eqn:main-prob}. (Here $Z\geq 0$ means that $Z$ is positive semidefinite.)

Note that formulating \eqref{eqn:main-prob} as a semidefinite program is also useful for computing spectral approximations numerically. To solve problem \eqref{eqn:main-prob} one can use CVX, a package for specifying and solving convex programs \cite{cvx,gb08}.
The standard CVX distribution includes well-tuned software packages for semidefinite programming such as SDPT3~\cite{SDPT3} and SeDuMi~\cite{SeDuMi}. For completeness we provide the MATLAB function {\tt bestmat}, which we have used in Example~\ref{ex:LauRhia} to compute a spectral approximation.

\begin{lstlisting}[style=Matlab-editor] 
function [S,R,a] = bestmat(Y,X)
% INPUT
%   Y ... the n x n matrix to be approximated
%   X ... a basis of the k-dimensional subspace
% OUTPUT
%   S ... a spectral approximation X_* of Y
%   R ... the residual matrix Y - S
%   a ... the coefficients of S in the basis X
n = size(Y, 1); 
k = length(X);
cvx_precision best 
cvx_begin sdp
    variables a(k) t
    expression R(n, n)
    S = zeros(n,n);
    for i=1:k, S = S + a(i)*X{i}; end
    R = Y - S;
    minimize( t )
    subject to
        [t * eye(n), R;
         R', t * eye(n)] >= 0;
cvx_end
end 
\end{lstlisting}

\section{Geometric characterizations of spectral approximations}\label{sec:geometry}

In this section we derive further equivalent characterizations of spectral approximations. Our approach is motivated by the analysis in~\cite{FaJoKnMa1996}, where the authors used certain generalizations of the field of values of matrices in the convergence analysis of Krylov subspace methods for solving linear algebraic systems; see Section~\ref{sec:minmax} for further details about this. In order to proceed we need the following definition; see also \cite[Definition 1.8.10, p.~85]{B:HoJo1994}.

\begin{defn}\label{def:FofM}
Let $k\geq 1$ be a given integer, and let $\F\in\{\R,\C\}$. The {\em $k$-dimensional field of the $k$ matrices $A_1,\dots,A_k\in{\mathbb F}^{n\times n}$ restricted to the subspace $\mathcal S\subseteq \F^n$} is defined by
\begin{equation}\label{eqn:GFV}
\mathscr{F}(A_1,\dots,A_k;{\mathcal S})
\equiv
\left\{\begin{bmatrix}
v^{H}A_{1}v\\
\vdots\\
v^{H}A_{k}v
\end{bmatrix}: v\in{\mathcal S},\;\|v\|_2=1\right\}\,\subset\,\F^k.    
\end{equation}
If $S=\F^n$, we call the set $\mathscr{F}(A_1,\dots,A_k;\F^n)$ the {\em $k$-dimensional field of the $k$ matrices $A_1,\dots,A_k$}, and denote it simply by $\mathscr{F}(A_1,\dots,A_k)$.
\end{defn}

Note that the matrices $A_1,\dots,A_k$ and the subspace ${\mathcal S}$ in Definition~\ref{def:FofM} are considered over the same field ${\mathbb F}$. Thus, if we consider $A_1,\dots,A_k \in \R^{n\times n}$, then $\mathcal{S}\subseteq \R^n$. In particular, the classical field of values of a single matrix $A\in\F^{n\times n}$ is the set $\mathscr{F}(A)=\mathscr{F}(A;{\C^n})$ (here a real matrix $A$ is considered complex). 

The set $\mathscr{F}(A_1,\dots,A_k)$ was called \emph{the generalized field of values} of the matrices $A_1,\dots,A_k\in\F^{n\times n}$ in~\cite{FaJoKnMa1996}. The set $\mathscr{F}(A_1,\dots,A_k;\C^n)$ for $A_1,\dots,A_k\in\F^{n\times n}$ was called the \emph{joint numerical range}, e.g., in~\cite{BiLi91,DavLiSal08,GutJonKar04,LauLiPoo22,LiPoo99}, the \emph{multiform numerical range} in~\cite{Poo94}, and the \emph{$k$-dimensional generalized field of values} in~\cite{Gre02}. The set $\mathscr{F}((A-\zeta I),\dots,(A-\zeta I)^k;\C^n)$ for certain $\zeta\in\C$ is useful in the analysis of the polynomial numerical hull of degree $k$ of a given matrix $A\in\F^{n\times n}$; see, e.g.,~\cite{DavLiSal08,Gre02}.

Let us give two illustrations of the $k$-dimensional field of $k$ matrices with a particular focus on the (non-)convexity if this set, which will be of great interest in the following.

\begin{example} (cf. \cite[Example~3.1]{MulTom20}) 
We consider $\F=\R$ and the two matrices
$$A_1=\begin{bmatrix} 1& 0\\ 0 & 0\end{bmatrix},\quad
A_2=\begin{bmatrix} 0& 0\\ 1 & 0\end{bmatrix}$$
in $\R^{2\times 2}$. Writing the elements of $\R^2$ in the form $v=[z_1,z_2]^T$ with $z_1,z_2\in\R$ we obtain
$$\mathscr{F}(A_1,A_2)=\left\{
\begin{bmatrix}
z_1^2\\ z_1z_2\end{bmatrix} : z_1,z_2\in\R,\;z_1^2+z_2^2=1\right\}\subset\R^2.$$
We now easily see that
$$\begin{bmatrix}
    0\\0
\end{bmatrix},
\begin{bmatrix}
    1\\0
\end{bmatrix}\in \mathscr{F}(A_1,A_2),\quad\mbox{but}\quad
\begin{bmatrix}
    z\\0
\end{bmatrix}\notin \mathscr{F}(A_1,A_2)\;\mbox{for $0<z<1$,}$$
and hence $\mathscr{F}(A_1,A_2)$ is not convex.
\end{example}

\begin{example} (cf.~\cite[Example~1.2]{BiLi91} or~\cite[Example~1]{GutJonKar04})
We consider $\F=\C$ and the three matrices
$$A_1=\begin{bmatrix} 1& 0\\ 0 & -1\end{bmatrix},\quad
A_2=\begin{bmatrix} 0& 1\\ 1 & 0\end{bmatrix},\quad
A_3=\begin{bmatrix} 0& -i\\ -i & 0\end{bmatrix}=-i A_2$$
in $\C^{2\times 2}$. Writing the elements of $\C^2$ in the form $v=[z_1,z_2]^T$ with $z_1,z_2\in\C$, we obtain
$$\mathscr{F}(A_1,A_2,A_3)=\left\{
\begin{bmatrix}
|z_1|^2-|z_2|^2 \\ 2\mathrm{Re}(\overline{z}_1z_2) \\ 
2\mathrm{Im}(\overline{z}_1z_2) \end{bmatrix} : z_1,z_2\in\C,\;|z_1|^2+|z_2|^2=1\right\}\subset\R^3.$$
Every element $w\in\mathscr{F}(A_1,A_2,A_3)$ satisfies
\begin{align*}
\|w\|_2^2 =(|z_1|^2-|z_2|^2)^2+4|z_1z_2|^2=(|z_1|^2+|z_2|^2)^2=1,   
\end{align*}
i.e., $\mathscr{F}(A_1,A_2,A_3)$ is the unit sphere in $\R^3$. Thus, the set $\mathscr{F}(A_1,A_2,A_3)$ is real (although $\F=\C$) and not convex. 
\end{example}

We are now ready to formulate another equivalent characterization of 
the solution of the problem \eqref{eqn:main-prob}.

\begin{cor} \label{cor:cond4}
Consider a $k$-dimensional subspace $\X=\mathrm{span}\{X_1,\dots,X_k\}\subset \F^{n\times n}$, and let $Y\in\F^{n\times n}\setminus\X$ and $X_*\in\X$ be given.
The matrix $X_*\in\X$ solves the problem \eqref{eqn:main-prob} if and only if 
\begin{equation}\label{eqn:cond4}
0 \in 
\mathrm{cvx}\left( \mathscr{F}(A_1,\dots,A_k;\Sigma_{Y-X_{*}}) \right),
\end{equation}
where $\mathrm{cvx}$ denotes the convex hull, and 
\begin{equation}\label{eqn:Ai}
    A_i \equiv (Y-X_{*})^{H}X_{i},\quad i=1,\dots,k.
\end{equation}
\end{cor}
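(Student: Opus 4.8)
The plan is to derive Corollary~\ref{cor:cond4} from Corollary~\ref{cor:cond3}, which already characterizes the solution in terms of the existence of maximal right singular vectors $v_1,\dots,v_\ell\in\Sigma_{Y-X_*}$ and weights $\omega_1,\dots,\omega_\ell>0$ summing to one such that $\sum_{j=1}^\ell \omega_j\, u_j^H X v_j=0$ for all $X\in\X$, where $u_j$ is the maximal left singular vector paired with $v_j$ via $(Y-X_*)v_j=\sigma_1 u_j$. The whole point is to rewrite the bilinear quantity $u_j^H X v_j$ as a quadratic form in $v_j$ alone, so that the condition becomes a statement purely about the set $\mathscr{F}(A_1,\dots,A_k;\Sigma_{Y-X_*})$.

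The key algebraic step is the substitution $u_j=\sigma_1^{-1}(Y-X_*)v_j$ (valid since $\sigma_1=\|Y-X_*\|_2>0$, because $Y\notin\X$). For each basis matrix $X_i$ this gives
\begin{equation*}
u_j^H X_i v_j=\sigma_1^{-1}\big((Y-X_*)v_j\big)^H X_i v_j=\sigma_1^{-1}\,v_j^H (Y-X_*)^H X_i v_j=\sigma_1^{-1}\,v_j^H A_i v_j,
\end{equation*}
with $A_i=(Y-X_*)^H X_i$ as in \eqref{eqn:Ai}. Since it suffices to test $\sum_{j}\omega_j u_j^H X v_j=0$ on the basis $X_1,\dots,X_k$, the orthogonality condition \eqref{eqn:cond3} is equivalent to
\begin{equation*}
\sum_{j=1}^\ell \omega_j\, v_j^H A_i v_j=0,\quad i=1,\dots,k,
\end{equation*}
where the common factor $\sigma_1^{-1}$ has been absorbed. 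Stacking these $k$ scalar equations into a single vector equation in $\F^k$, this reads $\sum_{j=1}^\ell \omega_j\, w_j=0$, where $w_j\equiv [v_j^H A_1 v_j,\dots,v_j^H A_k v_j]^T$ is precisely a point of $\mathscr{F}(A_1,\dots,A_k;\Sigma_{Y-X_*})$ because $v_j\in\Sigma_{Y-X_*}$ has unit norm.

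It then remains to translate the existence of such $\omega_j,v_j$ into the membership $0\in\mathrm{cvx}(\mathscr{F}(A_1,\dots,A_k;\Sigma_{Y-X_*}))$. One direction is immediate: if the $v_j$ and $\omega_j$ exist, then $0=\sum_j \omega_j w_j$ is a convex combination of points $w_j$ in the generalized field, hence lies in its convex hull. For the converse, any point in the convex hull is by definition a finite convex combination $\sum_j \omega_j w_j$ of points $w_j\in\mathscr{F}(A_1,\dots,A_k;\Sigma_{Y-X_*})$, each of which arises as $w_j=[v_j^H A_1 v_j,\dots,v_j^H A_k v_j]^T$ for some unit vector $v_j\in\Sigma_{Y-X_*}$; reversing the computation recovers exactly the data required by Corollary~\ref{cor:cond3}. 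The only subtlety is the bound $1\le\ell\le k+1$ (resp.\ $\le 2k+1$): Corollary~\ref{cor:cond3} requires at most $k+1$ (resp.\ $2k+1$) vectors, whereas an arbitrary convex combination might \emph{a priori} use more points. I expect this to be the main point to address, and the resolution is Carath\'eodory's theorem—$\mathscr{F}(A_1,\dots,A_k;\Sigma_{Y-X_*})$ lives in $\F^k$, viewed as a real set of real dimension $k$ when $\F=\R$ and $2k$ when $\F=\C$, so any point of its convex hull is a convex combination of at most $k+1$ (resp.\ $2k+1$) of its points, matching exactly the bounds carried over from Singer's theorem. Thus the two formulations are genuinely equivalent, completing the proof.
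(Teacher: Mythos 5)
Your proposal is correct and follows essentially the same route as the paper: substitute $u_j=\sigma_1^{-1}(Y-X_*)v_j$ to turn the bilinear conditions of Corollary~\ref{cor:cond3} into the quadratic forms $v_j^HA_iv_j$, stack them into a vector equation in $\F^k$, and read off membership of $0$ in the convex hull. Your explicit invocation of Carath\'eodory's theorem to reduce an arbitrary convex combination back to at most $k+1$ (resp.\ $2k+1$) points is a point the paper's proof leaves implicit, and it is handled correctly.
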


\begin{proof} Using $(Y-X_*)v_i=\sigma_1 u_i$, 
 condition \eqref{eqn:cond3} in Corollary~\ref{cor:cond3} is equivalent to
\[
\sum_{j=1}^{\ell}\omega_j \begin{bmatrix}
v_{j}^{H}A_{1}v_{j}\\
\vdots\\
v_{j}^{H}A_{k}v_{j}
\end{bmatrix}=
 \begin{bmatrix}
0\\
\vdots\\
0
\end{bmatrix}
 \in\mathbb{F}^{k}.
\]
Since $\omega_{1},\dots,\omega_{\ell}$ are positive real numbers with $\omega_{1}+\dots+\omega_{\ell}=1$, this means that the zero vector lies in the convex hull of the set $ \mathscr{F}(A_1,\dots,A_k;\Sigma_{Y-X_{*}})$.
\end{proof}
\medskip

The characterization presented in Corrolary~\ref{cor:cond4} can be identified with \cite[Lemma~2.15]{FaJoKnMa1996}, and also with \cite[Lemma 2.4]{GrGu1994} (for real matrices). While the proofs in~\cite{FaJoKnMa1996,GrGu1994} are non-trivial, our proof easily follows from the charaterzation of the spectral approximation in Theorem~\ref{thm:singer-mat1} and its reformulation in Corollary~\ref{cor:cond3}.

The characterization in Corollary~\ref{cor:cond4} uses the span of the maximal right singular vectors $\Sigma_{Y-X_*}$, which is inconvenient from a practical point of view. We next derive yet another equivalent characterization that avoids using this span.

\begin{thm}\label{thm:cond5}
Consider a $k$-dimensional subspace $\X=\mathrm{span}\{X_1,\dots,X_k\}\subset\F^{n\times n}$, and let $Y\in\F^{n\times n}\setminus\X$ and $X_*\in\X$ be given. In the notation of Corollary~\ref{cor:cond4}, 
condition \eqref{eqn:cond4} holds if and only if 
\begin{equation}\label{eqn:cond5}
\varrho \, e_1 \in 
\mathrm{cvx} \left( \mathscr{F}(A_Y, A_1,\dots,A_k) \right),
\end{equation}
where the matrices $A_1,\dots,A_k$ are defined by \eqref{eqn:Ai}, 
$$
    A_Y \equiv (Y-X_*)^HY,\quad \varrho \equiv \|Y-X_{*}\|_2^{2},
$$
and $e_1 \in \R^{k+1}$ is the first column of the identity matrix.
\end{thm}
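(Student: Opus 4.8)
The plan is to show that condition~\eqref{eqn:cond4} and condition~\eqref{eqn:cond5} are equivalent by relating a membership statement in the restricted field $\mathscr{F}(A_1,\dots,A_k;\Sigma_{Y-X_*})$ to a membership statement in the unrestricted field $\mathscr{F}(A_Y,A_1,\dots,A_k)$, where the extra matrix $A_Y$ plays the role of a ``selector'' that forces attention onto the subspace $\Sigma_{Y-X_*}$. The key observation I would exploit is that for any unit vector $v\in\F^n$, the first coordinate of the point $[\,v^HA_Yv,\,v^HA_1v,\dots,v^HA_kv\,]^T$ equals $v^H(Y-X_*)^HYv$, and by splitting $Y=(Y-X_*)+X_*$ one gets $v^HA_Yv=\|(Y-X_*)v\|_2^2 + v^H(Y-X_*)^HX_*v$. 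Since $X_*\in\X$, the second term is a linear combination of the coordinates $v^HA_iv$, so after a triangular change of coordinates the first component is essentially $\|(Y-X_*)v\|_2^2$, which is bounded above by $\varrho=\|Y-X_*\|_2^2$ with equality precisely when $v\in\Sigma_{Y-X_*}$. This is the geometric heart of the argument.

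First I would record the coordinate identity $v^HA_Yv=\|(Y-X_*)v\|_2^2+\sum_{i}c_i\,v^HA_iv$ valid whenever $X_*=\sum_i c_iX_i$, so that the first coordinate of a point of $\mathscr{F}(A_Y,A_1,\dots,A_k)$ differs from $\|(Y-X_*)v\|_2^2$ only by a fixed linear functional of the remaining coordinates. Next I would analyze the point $\varrho\,e_1$: its last $k$ coordinates are zero and its first coordinate is the maximal possible value $\varrho$ of $\|(Y-X_*)v\|_2^2$ over unit $v$. Because $\varrho$ is the \emph{maximum} of the first-coordinate-after-correction over the whole field, any convex combination of field points whose (corrected) first coordinate equals $\varrho$ must be supported entirely on points attaining that maximum, i.e.\ on vectors $v$ with $\|(Y-X_*)v\|_2=\|Y-X_*\|_2$, which is exactly the condition $v\in\Sigma_{Y-X_*}$. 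This ``extreme supporting hyperplane forces the support into the face'' step is the one I expect to be the main obstacle, since it requires arguing that a convex combination can reach the boundary value $\varrho$ only by using boundary points, and that the correction term $\sum_i c_i\,v^HA_iv$ vanishes on that face when the last $k$ coordinates are zero.

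Concretely, for the forward direction I would assume~\eqref{eqn:cond4}, giving vectors $v_1,\dots,v_\ell\in\Sigma_{Y-X_*}$ and weights $\omega_j$ with $\sum_j\omega_j[v_j^HA_1v_j,\dots,v_j^HA_kv_j]^T=0$. For each such $v_j$ the coordinate identity gives $v_j^HA_Yv_j=\|(Y-X_*)v_j\|_2^2+\sum_i c_i\,v_j^HA_iv_j=\varrho+\sum_i c_i\,v_j^HA_iv_j$, and taking the convex combination with weights $\omega_j$ the correction term telescopes to $\sum_i c_i\cdot 0=0$, so $\sum_j\omega_j\,v_j^HA_Yv_j=\varrho$. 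Hence the convex combination of the corresponding points of $\mathscr{F}(A_Y,A_1,\dots,A_k)$ equals exactly $\varrho\,e_1$, proving~\eqref{eqn:cond5}. For the converse I would assume~\eqref{eqn:cond5}, writing $\varrho\,e_1=\sum_j\omega_j[v_j^HA_Yv_j,\dots,v_j^HA_kv_j]^T$; the last $k$ coordinates being zero gives $\sum_j\omega_j[v_j^HA_1v_j,\dots]^T=0$, and feeding this into the first coordinate via the identity yields $\varrho=\sum_j\omega_j\|(Y-X_*)v_j\|_2^2$. Since each summand is at most $\varrho$ and the $\omega_j$ are positive weights summing to one, every $v_j$ with positive weight must satisfy $\|(Y-X_*)v_j\|_2^2=\varrho$, i.e.\ $v_j\in\Sigma_{Y-X_*}$, which together with the already-established vanishing of the last $k$ coordinates is precisely~\eqref{eqn:cond4}. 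The only care needed is to keep track of the field being taken over $\F^n$ (so the $v_j$ range over all unit vectors) and to note that Carathéodory-type bounds on $\ell$ are inherited automatically and need not be reproved here.
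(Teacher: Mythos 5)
Your proposal is correct and follows essentially the same route as the paper: both rest on the decomposition $A_Y=(Y-X_*)^H(Y-X_*)+A_*$ with $A_*=(Y-X_*)^HX_*\in\mathrm{span}\{A_1,\dots,A_k\}$, so that the correction term vanishes under any convex combination whose last $k$ coordinates are zero, and the converse uses exactly your extremality argument that a convex combination of values $\|(Y-X_*)v_j\|_2^2\leq\varrho$ can equal $\varrho$ only if every $v_j$ lies in $\Sigma_{Y-X_*}$. Your write-up is in fact slightly more explicit than the paper's on that last step (the paper states it for a two-point combination and asserts the conclusion without spelling out the averaging argument).
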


\begin{proof}
If $0\in\mathrm{cvx}\left( \mathscr{F}(A_1,\dots,A_k;\Sigma_{Y-X_{*}}) \right)$, then there exist $v,w\in\Sigma_{Y-X_{*}}$ and 
$\alpha\in [0,1]$, such that
\[
\alpha\begin{bmatrix}
w^{H}A_1w\\
\vdots\\
w^{H}A_{k}w
\end{bmatrix}+(1-\alpha)\begin{bmatrix}
v^{H}A_{1}v\\
\vdots\\
v^{H}A_{k}v
\end{bmatrix}=\begin{bmatrix}
0\\
\vdots\\
0
\end{bmatrix}.
\]
Using $X_*\in \mathrm{span}\{X_1,\dots,X_k\}$ and denoting 
$A_{*}\equiv (Y-X_{*})^{H}X_{*}$, we find out that
$A_{*} \in \mathrm{span}\{A_1,\dots,A_k\}$ and, therefore also
$$
\alpha w^{H}A_{*}w+(1-\alpha)v^{H}A_{*}v=0.
$$
Further, realize that for any $z\in\Sigma_{Y-X_{*}}$ it holds that
$$
\varrho = z^{H}\left(Y-X_{*}\right)^{H}\left(Y-X_{*}\right)z
= z^{H}(A_Y-A_{*})z.
$$
Taking any two vectors $v,w\in\Sigma_{Y-X_{*}}$ 
we obtain
\begin{align*}
\varrho  &=  \alpha w^{H}\left(A_Y-A_{*}\right)w+(1-\alpha)v^{H}\left(A_Y-A_{*}\right)v\\
  &=  \alpha w^{H}A_Y w+(1-\alpha)v^{H} A_Y v,
\end{align*}
which shows that 
\begin{equation}\label{eqn:cond5aux}
\alpha\begin{bmatrix}
w^{H}A_Yw\\
w^{H}A_1w\\
\vdots\\
w^{H}A_{k}w
\end{bmatrix}+(1-\alpha)\begin{bmatrix}
v^{H}A_Yv\\
v^{H}A_{1}v\\
\vdots\\
v^{H}A_{k}v
\end{bmatrix}=\begin{bmatrix}
\varrho \\
0\\
\vdots\\
0
\end{bmatrix},
\end{equation}
i.e., the condition~\eqref{eqn:cond5} holds.

On the other hand, let \eqref{eqn:cond5aux} hold for some unit Euclidean norm vectors $v,w\in\F^n$ and a scalar $\alpha\in [0,1]$. Since $A_*\in\mathrm{span}\{A_1,\dots,A_k\}$, the last $k$ entries in this vector equation yield
\[
\alpha w^{H}A_{*}w+(1-\alpha)v^{H}A_{*}v=0.
\]
Using this in the first entry of the above vector equation \eqref{eqn:cond5aux} leads to
\begin{align*}
\varrho & =  \alpha w^{H}A_Yw+(1-\alpha)v^{H}A_Yv\\
& =  \alpha w^{H}(A_Y-A_*)w+(1-\alpha)v^{H}(A_Y-A_*)v\\
 & =  \alpha w^{H}\left(Y-X_{*}\right)^{H}\left(Y-X_{*}\right)w+(1-\alpha)v^{H}\left(Y-X_{*}\right)^{H}\left(Y-X_{*}\right)v.
\end{align*}
This shows that $v,w\in\Sigma_{Y-X_{*}}$, and hence $0\in\mathrm{cvx}\left( \mathscr{F}(A_1,\dots,A_k;\Sigma_{Y-X_{*}}) \right)$. 
\end{proof}

We next illustrate the characterization \eqref{eqn:cond5} in Theorem~\ref{thm:cond5} using several examples of known spectral approximations that have appeared in the previous literature.

\begin{example}\label{ex:singular}
Let $\X=\mathrm{span}\{X_1,\dots,X_k\}$ be any $k$-dimensional subspace of $\F^{n\times n}$, where each matrix $X\in\X$ statisfies $Xw=0$ for some fixed unit norm vector $w\in\F^n$. Thus, the matrices in $\X$ are singular and have a common eigenvector corresponding to the common eigenvalue zero. Let $Y\in\F^{n\times n}\setminus\X$ be any matrix that satisfies $Yw=\|Y\|_2w$, i.e., the vector $w$ is a maximal right singular vector of $Y$. Using our characterization in Theorem~\ref{thm:cond5} we want to show that $X_*=0\in\X$ is a spectral approximation of $Y$ from~$\X$. (This is easily seen by the fact that any $X\in\X$ satisfies $\|Y-X\|_2\geq \|(Y-X)w\|_2=\|Y\|_2$, with equality for $X=0$.) For $X_*=0$ and the matrices used in Theorem~\ref{thm:cond5} we obtain
$$\begin{bmatrix}
w^HA_Yw\\
w^{H}A_{1}w\\
\vdots\\
w^{H}A_{k}w
\end{bmatrix}=
\begin{bmatrix}
w^HY^HYw\\
w^{H}Y^HX_1w\\
\vdots\\
w^{H}Y^HX_kw
\end{bmatrix}=
\begin{bmatrix}
\|Y\|_2^2\\
0\\
\vdots\\
0
\end{bmatrix}=\varrho\, e_1\in\mathscr{F}(A_Y,A_1,\dots,A_k),$$
and hence the condition of Theorem~\ref{thm:cond5} is satisfied. A special case of this example is given by a singular matrix $A\in\F^{n\times n}$, and $X_i=A^i$ for $i=1,\dots,k$, where $k$ is less than the degree of the minimal polynomial of $A$. Then $Y=I\in\F^{n\times n}\setminus\X$,
and we can formulate the result just obtained in the form 
$$\min_{\substack{\mathrm{deg}(p)\leq k\\p(0)=1}}\|p(A)\|_2=1,$$
i.e., ideal GMRES (see \eqref{eqn:iG}) completely stagnates for a singular matrix $A$; cf.~\cite[Proposition 2.1]{FaJoKnMa1996}.
\end{example}

\begin{example}\label{ex:non-unique}
Consider a singular diagonal matrix 
$$A=\mathrm{diag}(0,\lambda_2,\dots,\lambda_n)\in\R^{n\times n},$$
where $\lambda_2,\dots,\lambda_{n}$ are nonzero and pairwise distinct, and the subspace $\X={\rm span}\{A,\dots,A^k\}\subset\R^{n\times n}$, where $1\leq k\leq n-1$. We already know from Example~\ref{ex:singular}, that in this case $X_*=0$ is a spectral approximation of $Y=I$ from~$\X$. As explained in~\cite[Remark~2.4]{LiTi2009} (for a more general problem), the spectral approximation in this case is not uniquely determined. Indeed, any matrix 
$$X_*=\sum_{j=1}^k\alpha_j A^j \equiv p(A)\in\X$$
that satisfies $|1-p(\lambda_i)|\leq 1$ for $i=2,\dots,n$ is a spectral approximation of $Y=I$ from~$\X$. In Figure~\ref{fig:kfield-plots} (top) we show the set $\mathscr{F}(A_Y,A_1,A_2)\subset\R^3$ for the matrix $A\in\R^{3,3}$ with $\lambda_2=1$, $\lambda_3=2$, $\X=\mathrm{span}\{A,A^2\}$, and $X_*=0.2\,A+0.3\,A^2$, which yields $\varrho=\|Y-X_*\|_2^2=1$. The location of the vector $e_1\in\R^3$ is marked by a big (red) dot. We generated all plots of $\mathscr{F}(A_Y,A_1,A_2)\subset\R^3$ in Figure~\ref{fig:kfield-plots} in MATLAB R2024b using the function \texttt{sphere} for obtaining unit norm vectors $v\in\R^3$, and the function \texttt{surf} for plotting the corresponding vectors $[v^TA_Yv,\,v^TA_1v,\,v^TA_2v]^T\in\R^3$. 
\end{example}

\begin{example}\label{ex:jordan}
Let $J_\lambda\in\F^{n\times n}$ be the $n\times n$ Jordan block with eigenvalue $\lambda$. We consider the $k$-dimensional subspace $\X=\mathrm{span}\{I,J_\lambda,\dots,J_\lambda^{k-1}\}\subset \F^{n\times n}$, and $Y=J_\lambda^{k}$, where $1\leq k\leq n-1$. It was shown in~\cite[Theorem~3.4] {LiTi2009} that
$$X_*=J_\lambda^k-(J_\lambda-\lambda I)^k=Y-J_0^k$$ 
is a spectral approximation of $Y$ from~$\X$. In the notation of Theorem~\ref{thm:cond5} we have $Y-X_*=J_0^k$, hence $\varrho=\|Y-X_*\|_2^2=1$. For the vector $v=e_{k+1}$ we obtain $v^H(Y-X_*)^H=(J_0^ke_{k+1})^T=e_1^T$. Therefore,
$$\begin{bmatrix}
v^HA_Yv\\
v^{H}A_{1}v\\
\vdots\\
v^{H}A_{k}v
\end{bmatrix}=
\begin{bmatrix}
e_1^TJ_\lambda^{k}e_{k+1}\\
e_1^TJ_\lambda^{k-1}e_{k+1}\\
\vdots\\
e_1^Te_{k+1}
\end{bmatrix}= e_1\in\mathscr{F}(A_Y,A_1,\dots,A_k),$$
and hence the condition of Theorem~\ref{thm:cond5} is satisfied. In Figure~\ref{fig:kfield-plots} (middle) we show the set $\mathscr{F}(A_Y,A_1,A_2)\subset\R^3$ for the matrix $J_1\in\R^{3,3}$, and again the location of the vector $e_1\in\R^3$ is marked by a big (red) dot.
\end{example}

\begin{example}\label{ex:LauRhia}
Lau and Riha~\cite[pp.~119--120]{LaRi1981} gave a numerical example for a spectral approximation of the symmetric matrix 
$$Y=\begin{bmatrix}
0.67 & -0.13 & 0.27\\ -0.13 & 0.49 & 0.33\\ 0.27 & 0.33 & 0.63\end{bmatrix}\in\R^{3\times 3}$$
from the subspace $\X={\mathrm span}\{X_1,X_2\}\subset\R^{3,3}$, where
$$X_1 = \begin{bmatrix} 0.13 & 0.04 & -0.06\\ 0.04 & 0.44 & 0.21\\  -0.06 & 0.21 & 0.39\end{bmatrix},\quad
X_2 = \begin{bmatrix}0.22 & -0.09 & -0.11\\ -0.09 & 0.35 & 0.18\\ -0.11 & 0.18 & 0.51\end{bmatrix}.$$
According to our computation using the CVX software and the MATLAB function {\tt bestmat}, a spectral approximation of $Y$ is given by 
$$X_*=0.270019\,X_1+1.587964\,X_2,$$
where the coefficients are rounded to six decimal places. (In~\cite{LaRi1981} the coefficients rounded to six decimal places are $0.270020$ and $1.587963$.) Using MATLAB's {\tt norm} function we compute $\varrho=\|Y-X_*\|_2^2\approx 0.294068$.  In Figure~\ref{fig:kfield-plots} (bottom) we show the corresponding set $\mathscr{F}(A_Y,A_1,A_2)\subset\R^3$. The location of the vector $\varrho\, e_1\in\R^3$, again marked by a big (red) dot, numerically confirms that $\varrho\, e_1\in \mathscr{F}(A_Y,A_1,A_2)$.
\end{example}

\begin{figure}
    \centering
    \includegraphics[width=0.67\linewidth]{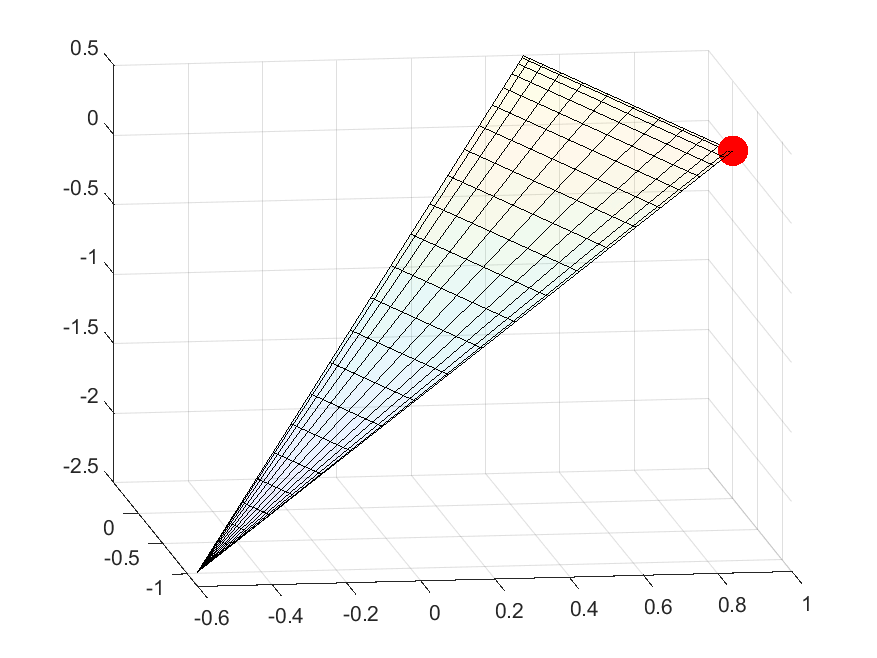}\\
    \includegraphics[width=0.67\linewidth]{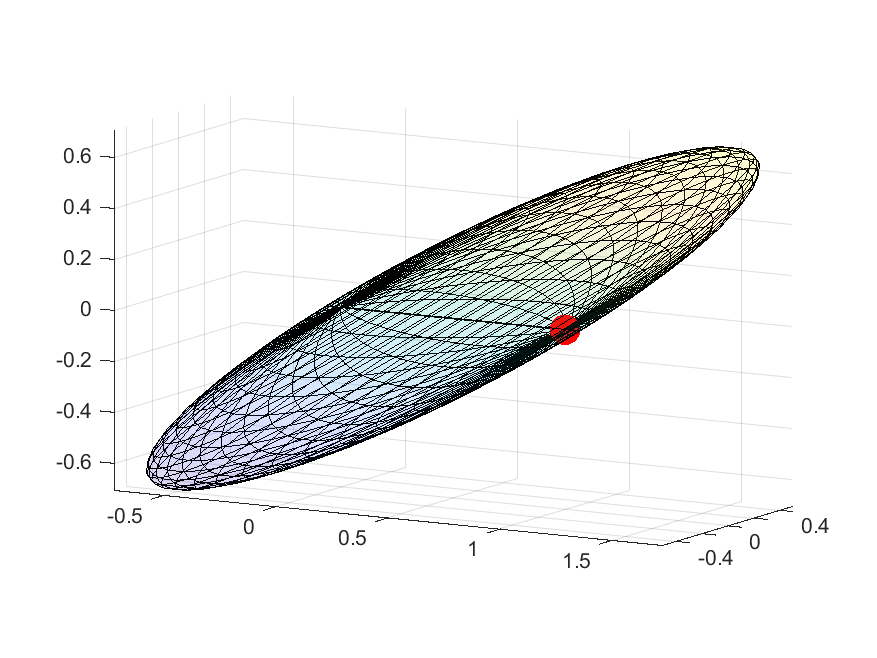}\\
    \includegraphics[width=0.67\linewidth]{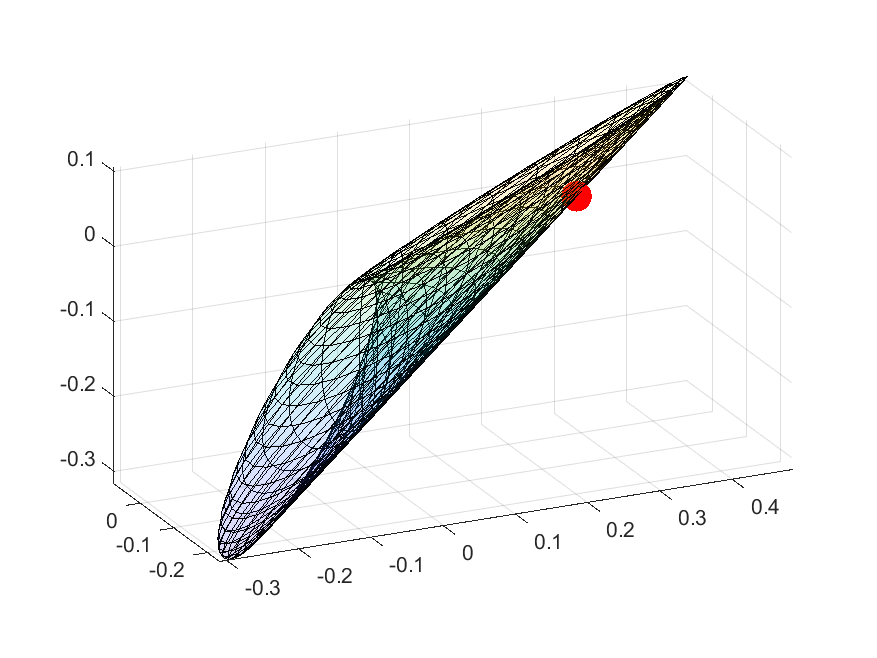}
    \caption{The sets $\mathscr{F}(A_Y,A_1,A_2)\subset\R^{3}$ and the location of the corresponding vectors $\varrho\, e_1\in\R^3$ for the matrices in Examples~\ref{ex:singular} (top), \ref{ex:jordan} (middle), and \ref{ex:LauRhia} (bottom).}\label{fig:kfield-plots}
\end{figure}

\section{Min-max vs. max-min problems}\label{sec:minmax}

Using the submultiplicativity of the Euclidean norm on $\F^n$, we easily see that a lower bound on the value of the best approximation problem~\eqref{eqn:main-prob} is given by 
\begin{equation}\label{eqn:maxmin-minmax}
\max_{\substack{v\in\F^n\\ \|v\|_2=1}}\min_{X\in\mathcal{X}}\|(Y-X)v\|_2 \leq \min_{X\in\mathcal{X}}\|Y-X\|_2=\min_{X\in\mathcal{X}}\max_{\substack{v\in\F^n\\\|v\|_2=1}}\|(Y-X)v\|_2.
\end{equation}
A special case of \eqref{eqn:maxmin-minmax} occurs
in the analysis of the GMRES method for solving linear algebraic systems $Ax=b$. In that context we have $Y=I$ and $\X=\mathrm{span}\{A,\dots,A^k\}\subset\F^{n\times n}$, and the two expressions on the right hand side are equal to the ideal GMRES problem mentioned in the Introduction; see~\eqref{eqn:iG}. The max-min problem that gives the lower bound in \eqref{eqn:maxmin-minmax} is called the \emph{worst-case GMRES problem}. Its value gives an attainable upper bound on the $k$th relative residual norm of the GMRES method applied to $Ax=b$ with $\|b\|_2=1$ and the initial approximation $x_0=0$; see, e.g.,~\cite{FabLieTic13}. We then ask under which conditions ideal and worst-case GMRES are equal. Note that in \cite[equations (2.3) and (2.14)]{LiTi2009} we defined a family of related problems where an analogous question can also be asked.

The paper~\cite{FaJoKnMa1996} contains a detailed analysis of the relation between worst-case and ideal GMRES using the set $\mathscr{F}(A,\dots,A^k)$. 
(The authors refer to ideal GMRES as \emph{polynomial preconditioning}.) In particular, the paper contains one of the first published examples that for these problems a strict inequality may hold in \eqref{eqn:maxmin-minmax} (see, e.g.,~\cite{FabLieTic13,To1997} for further examples), and it is shown in~\cite[Theorems~2.7 and~2.8]{FaJoKnMa1996} that
\begin{align*}
& \max_{\substack{v\in\F^n\\ \|v\|_2=1}}\min_{X\in\mathcal{X}}\|(I-X)v\|_2=1\quad
\Longleftrightarrow
\quad 0\in \mathscr{F}(A,\dots,A^k),\\
& \min_{X\in\mathcal{X}}\max_{\substack{v\in\F^n\\\|v\|_2=1}}\|(I-X)v\|_2=1 \quad
\Longleftrightarrow
\quad 0\in \mathrm{cvx}(\mathscr{F}(A,\dots,A^k)),  
\end{align*}
where $\X=\mathrm{span}\{A,\dots,A^k\}$. Thus, if $\mathscr{F}(A,\dots,A^k)$ is convex, then worst-case and ideal GMRES either both converge, or both stagnate completely until step~$k$. Moreover, if $0\in \mathscr{F}(A,\dots,A^k)$ and $\mathscr{F}(A,\dots,A^k)$ is convex, then equality holds in \eqref{eqn:maxmin-minmax} for $Y=I$ and $\X=\mathrm{span}\{A,\dots,A^k\}$. 

We will now derive  general conditions under which equality holds in \eqref{eqn:maxmin-minmax}. Our investigations use the following well known result, which shows a certain equivalence of optimality and orthogonality; see, e.g.,~\cite[Theorem~2.3.2]{LieStr13}.

\begin{lem}\label{lem:orth}
Consider a $k$-dimensonal subspace $\X=\mathrm{span}\{X_1,\dots,X_k\}\subset \F^{n\times n}$, and let $Y\in \F^{n\times n} \setminus\X$, $X_*\in\X$, and $w\in\F^n$ be given. Then the following are equivalent:
\begin{itemize}
\item[$(1)$] The vector $X_*w$ is a best approximation of $Yw$ with respect to the Euclidean norm, i.e.,
$$\|(Y-X_*)w\|_2=\min_{X\in\X}\|(Y-X)w\|_2.$$
\item[$(2)$] $(Y-X_*)w\perp \mathrm{span}\{X_1w,\dots,X_kw\}$.
\end{itemize}
\end{lem}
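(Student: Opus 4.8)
The plan is to reduce this matrix statement to a best-approximation problem for vectors in the Euclidean inner product space $\F^n$, and then invoke the classical orthogonality characterization of best approximations (the projection theorem). The key observation is that the linear map $X\mapsto Xw$ sends $\X=\mathrm{span}\{X_1,\dots,X_k\}$ onto the subspace
\[
\mathcal{W}\equiv \mathrm{span}\{X_1w,\dots,X_kw\}\subseteq\F^n,
\]
because every $X\in\X$ has the form $X=\sum_{i=1}^k c_iX_i$ and hence $Xw=\sum_{i=1}^k c_iX_iw$, while conversely every element of $\mathcal{W}$ arises this way. Thus $\{Xw:X\in\X\}=\mathcal{W}$, and consequently
\[
\min_{X\in\X}\|(Y-X)w\|_2=\min_{z\in\mathcal{W}}\|Yw-z\|_2 ,
\]
with a minimizer $X_*\in\X$ corresponding to the minimizer $z_*\equiv X_*w\in\mathcal{W}$, and vice versa.

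First I would record that, under this identification, condition $(1)$ states precisely that $z_*=X_*w$ is a best approximation of the fixed vector $Yw$ from the subspace $\mathcal{W}$ with respect to the Euclidean norm. The equivalence to be proved then becomes the standard statement that $z_*$ is such a best approximation if and only if $Yw-z_*\perp\mathcal{W}$; substituting $z_*=X_*w$ and recalling $\mathcal{W}=\mathrm{span}\{X_1w,\dots,X_kw\}$ turns this into exactly condition $(2)$, since $Yw-z_*=(Y-X_*)w$.

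It remains to supply the orthogonality characterization, which I would either cite from \cite[Theorem~2.3.2]{LieStr13} or prove directly. For a direct argument, for any $z\in\mathcal{W}$ I would expand
\[
\|Yw-z\|_2^2=\|Yw-z_*\|_2^2+2\,\mathrm{Re}\langle Yw-z_*,\,z_*-z\rangle+\|z_*-z\|_2^2 ,
\]
noting $z_*-z\in\mathcal{W}$. If $Yw-z_*\perp\mathcal{W}$, the middle term vanishes, so $\|Yw-z\|_2\ge\|Yw-z_*\|_2$ for all $z\in\mathcal{W}$ and $z_*$ is optimal. Conversely, if $z_*$ is optimal but $\langle Yw-z_*,u\rangle\neq 0$ for some $u\in\mathcal{W}$, then taking $z=z_*+tu$ with $t$ a small positive multiple of $\overline{\langle Yw-z_*,u\rangle}$ makes the middle term negative and dominant, strictly decreasing the distance, a contradiction; hence $Yw-z_*\perp\mathcal{W}$.

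I do not expect a genuine obstacle, since the content is the finite-dimensional projection theorem. The only subtlety worth stating cleanly is that the map $X\mapsto Xw$ need not be injective (a nonzero $X\in\X$ may satisfy $Xw=0$), but this is harmless because only its image $\mathcal{W}$ enters the minimization, so the passage between minimizers $X_*$ and $z_*=X_*w$ is unaffected.
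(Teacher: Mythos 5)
Your reduction to the projection theorem in $\F^n$ via the map $X\mapsto Xw$ is correct, and the paper itself gives no proof of this lemma but simply cites the standard orthogonality characterization of Euclidean best approximation (Theorem 2.3.2 of the cited Krylov subspace methods monograph), which is exactly the argument you supply. Your handling of the non-injectivity of $X\mapsto Xw$ and the perturbation argument in the converse direction are both sound, so the proposal matches the paper's intent.
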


The next result contains our necessary and sufficient conditions for equality in~\eqref{eqn:maxmin-minmax}.

\begin{thm}\label{thm:minmax}
Consider a $k$-dimensional subspace $\X=\mathrm{span}\{X_1,\dots,X_k\}\subset \F^{n\times n}$, and let $X_*\in\X$ be a spectral approximation of $Y\in \F^{n\times n} \setminus\X$. In the notation of Theorem~\ref{thm:cond5}, the following are equivalent:
\begin{itemize}
\item[$(1)$] Equality holds in~\eqref{eqn:maxmin-minmax}. 
\item[$(2)$] $0\in \mathscr{F}(A_1,\dots,A_k;\Sigma_{Y-X_{*}})$,
\item[$(3)$] $\varrho\, e_1 \in \mathscr{F}(A_Y, A_1,\dots,A_k)$.
\end{itemize}
\end{thm}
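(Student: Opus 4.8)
The plan is to establish the two equivalences $(1)\Leftrightarrow(2)$ and $(2)\Leftrightarrow(3)$ separately. The first rests on Lemma~\ref{lem:orth} together with the observation that a maximizer of the left-hand side of \eqref{eqn:maxmin-minmax} is forced into $\Sigma_{Y-X_*}$; the second is a single-vector version of the algebra already carried out in the proof of Theorem~\ref{thm:cond5}.

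For $(1)\Leftrightarrow(2)$, write $\sigma_1\equiv\|Y-X_*\|_2$, so that $\varrho=\sigma_1^2$. Since the unit sphere is compact and $v\mapsto\min_{X\in\X}\|(Y-X)v\|_2$ is continuous, the max-min value is attained at some unit vector $v_0$. For the forward direction, if equality holds in \eqref{eqn:maxmin-minmax} then
$$\sigma_1=\min_{X\in\X}\|(Y-X)v_0\|_2\le\|(Y-X_*)v_0\|_2\le\|Y-X_*\|_2=\sigma_1.$$
This squeeze forces $\|(Y-X_*)v_0\|_2=\sigma_1$, so $v_0\in\Sigma_{Y-X_*}$, and it forces $X_*v_0$ to be a best Euclidean approximation of $Yv_0$ from $\mathrm{span}\{X_1v_0,\dots,X_kv_0\}$. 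By Lemma~\ref{lem:orth} this yields $(Y-X_*)v_0\perp X_iv_0$, i.e.\ $v_0^H A_iv_0=0$ for all $i$, which is exactly $(2)$. The converse reverses this: from $(2)$ I obtain a unit $v_0\in\Sigma_{Y-X_*}$ satisfying the orthogonality relations, so Lemma~\ref{lem:orth} gives $\min_{X\in\X}\|(Y-X)v_0\|_2=\|(Y-X_*)v_0\|_2=\sigma_1$; hence the max-min value is at least $\sigma_1$, and together with \eqref{eqn:maxmin-minmax} it equals $\sigma_1$, which is $(1)$.

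For $(2)\Leftrightarrow(3)$, I would set $A_*\equiv(Y-X_*)^HX_*$ and use the key identity $A_Y-A_*=(Y-X_*)^H(Y-X_*)$. Given a unit vector $v$, the relations $v^HA_iv=0$ for $i=1,\dots,k$ force $v^HA_*v=0$, because $A_*\in\mathrm{span}\{A_1,\dots,A_k\}$; hence $v^HA_Yv=v^H(A_Y-A_*)v=\|(Y-X_*)v\|_2^2$. Since for a unit vector $v$ the membership $v\in\Sigma_{Y-X_*}$ is equivalent to $\|(Y-X_*)v\|_2^2=\varrho$, the statement ``$v^HA_iv=0$ for all $i$ and $v\in\Sigma_{Y-X_*}$'' coincides with ``$v^HA_iv=0$ for all $i$ and $v^HA_Yv=\varrho$''. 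The former is precisely $(2)$ and the latter precisely $(3)$.

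The calculations are short; the only point that requires care is the forward implication $(1)\Rightarrow(2)$, namely the squeeze argument guaranteeing that the maximizing vector $v_0$ actually lands in $\Sigma_{Y-X_*}$, so that the maximal-singular-vector property and Lemma~\ref{lem:orth} can be invoked together. Once that is secured, the remaining steps are a routine translation among orthogonality, the span of maximal right singular vectors, and the coordinates of the $k$-dimensional field $\mathscr{F}$.
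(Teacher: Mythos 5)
Your proposal is correct and follows essentially the same route as the paper's proof: the squeeze argument at a maximizing unit vector combined with Lemma~\ref{lem:orth} for $(1)\Leftrightarrow(2)$, and the identity $A_Y-A_*=(Y-X_*)^H(Y-X_*)$ together with $A_*\in\mathrm{span}\{A_1,\dots,A_k\}$ for $(2)\Leftrightarrow(3)$. The only difference is organizational --- the paper argues the cycle $(1)\Rightarrow(2)\Rightarrow(3)\Rightarrow(1)$ rather than two separate biconditionals --- which does not change the substance.
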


\begin{proof}
$(1) \Rightarrow (2)$: If equality holds in \eqref{eqn:maxmin-minmax}, i.e.,
$$
\|Y-X_*\|_2=\min_{X\in\X}\|Y-X\|_2
=\max_{\substack{v\in\F^n \\ \|v\|_2=1}}
\min_{X\in\X} \|(Y-X)v\|_2,$$
then there exists a unit Euclidean norm vector $w\in\F^n$ such that 
$$
\|Y-X_*\|_2=\min_{X\in\X} \|(Y-X)w\|_2 \leq \| (Y-X_*)w \|_2 \leq  \|Y-X\|_2,
$$ 
and instead of inequalities we can write equalities.
Therefore, $w\in \Sigma_{Y-X_*}$
and Lemma~\ref{lem:orth} yields $(Y-X_*)w\perp \mathrm{span}\{X_{1}w,\dots,X_{k}w\}$, so that 
\begin{equation}\label{eqn:wc1}
0\in\mathscr{F}(A_{1},\dots,A_{k};\Sigma_{Y-X_*}).
\end{equation}

$(2) \Rightarrow (3)$:  If \eqref{eqn:wc1} holds, then there exists a unit Euclidean norm vector $w\in\Sigma_{Y-X_{*}}$ such that $w^{H}A_jw=0$ for $j=1,\dots,k$. Since $X_{*}\in \X$, i.e., $A_{*}\in \mathrm{span}\{A_1,\dots,A_k\}$, this yields $w^{H}A_{*}w=0$, and hence
$$\varrho=w^{H}(A_Y-A_{*})w=w^{H}A_Yw,$$
so that $\varrho\,e_1\in\mathscr{F}(A_Y,A_1,\dots,A_{k})$.

$(3) \Rightarrow (1)$: If $\varrho\,e_1\in\mathscr{F}(A_Y,A_1,\dots,A_{k})$, then
there exists a unit Euclidean norm vector
$w\in\F^n$ such that 
$\varrho = w^H A_Y w$ and
$$
0 = w^{H}A_{j}w = w^{H}(Y-X_{*})^{H}X_{j}w,\quad\mbox{for $j=1,\dots,k$,
}$$ 
i.e., $(Y-X_*)w\perp \mathrm{span}\{X_{1}w,\dots,X_{k}w\}$.
The orthogonality conditions also imply $w^{H}A_{*}w = 0$, 
so that 
$$\varrho=w^{H}A_Yw = w^{H}(A_Y-A_{*})w=w^{H}(Y-X_*)^H (Y-X_*)w,$$
and $w\in\Sigma_{Y-X_*}$. Finally, Lemma~\ref{lem:orth} now yields
\begin{align*}
\min_{X\in\X}\|Y-X\|_2 &= \|Y-X_{*}\|_2=\|(Y-X_{*})w\|_2\\
&=\min_{X\in\mathcal{X}}\|(Y-X)w\|_2\leq \max_{\substack{v\in\F^n\\\|v\|_2=1}}\min_{X\in\mathcal{X}}\|(Y-X)v\|_2,
\end{align*}
so that we have equality in \eqref{eqn:maxmin-minmax}. 
\end{proof}

Note that the equivalence of (1) and (2) in Theorem~\ref{thm:minmax} can also be deduced from \cite[Lemma~2.16]{FaJoKnMa1996}.

%
%

Combining Theorem~\ref{thm:minmax} with the equivalent conditions \eqref{eqn:cond4} and \eqref{eqn:cond5} 
yields the following result.

\begin{cor}
In the notation of Theorem~\ref{thm:minmax}, suppose that  $X_*\in\X$ is a spectral approximation of $Y\in \F^{n\times n} \setminus\X$. If the set $\mathscr{F}(A_1,\dots,A_k;\Sigma_{Y-X_{*}})$ or the set $\mathscr{F}(A_Y, A_1,\dots,A_k)$ is convex, then equality holds in \eqref{eqn:maxmin-minmax}.
\end{cor}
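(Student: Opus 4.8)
The plan is to combine the two equivalences already established in the excerpt with the elementary fact that a set $S$ coincides with its convex hull precisely when $S$ is convex. Since $X_*$ is assumed to be a spectral approximation of $Y$, Corollary~\ref{cor:cond4} guarantees that $0\in\mathrm{cvx}(\mathscr{F}(A_1,\dots,A_k;\Sigma_{Y-X_*}))$, and Theorem~\ref{thm:cond5} then gives the equivalent statement $\varrho\,e_1\in\mathrm{cvx}(\mathscr{F}(A_Y,A_1,\dots,A_k))$. Thus both convex hulls \emph{already} contain their respective distinguished points, purely by virtue of $X_*$ being a spectral approximation; the only role of the convexity hypothesis is to let us drop the ``$\mathrm{cvx}$''.

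First I would handle the case in which $\mathscr{F}(A_1,\dots,A_k;\Sigma_{Y-X_*})$ is convex. Then $\mathrm{cvx}(\mathscr{F}(A_1,\dots,A_k;\Sigma_{Y-X_*}))=\mathscr{F}(A_1,\dots,A_k;\Sigma_{Y-X_*})$, so the membership $0\in\mathrm{cvx}(\cdots)$ supplied by Corollary~\ref{cor:cond4} upgrades to $0\in\mathscr{F}(A_1,\dots,A_k;\Sigma_{Y-X_*})$. This is exactly condition~(2) of Theorem~\ref{thm:minmax}, whose equivalence with condition~(1) yields equality in \eqref{eqn:maxmin-minmax}.

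Next I would treat the case in which $\mathscr{F}(A_Y,A_1,\dots,A_k)$ is convex. Here I use the chain above in the form obtained from Theorem~\ref{thm:cond5}: since $X_*$ is a spectral approximation, $\varrho\,e_1\in\mathrm{cvx}(\mathscr{F}(A_Y,A_1,\dots,A_k))$. Convexity again collapses the convex hull onto the set itself, giving $\varrho\,e_1\in\mathscr{F}(A_Y,A_1,\dots,A_k)$, which is condition~(3) of Theorem~\ref{thm:minmax}; its equivalence with condition~(1) again delivers equality in \eqref{eqn:maxmin-minmax}.

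There is no genuine obstacle in this argument, since the entire content has been front-loaded into Corollary~\ref{cor:cond4}, Theorem~\ref{thm:cond5}, and Theorem~\ref{thm:minmax}. The only point worth stating carefully is the logical direction: convexity of either set alone does not force equality in \eqref{eqn:maxmin-minmax}, but convexity \emph{together with} the standing assumption that $X_*$ is a spectral approximation does, because that assumption is precisely what places the distinguished point ($0$, respectively $\varrho\,e_1$) inside the convex hull in the first place.
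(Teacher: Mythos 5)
Your argument is correct and follows the paper's own proof exactly: both invoke the memberships $0\in\mathrm{cvx}(\mathscr{F}(A_1,\dots,A_k;\Sigma_{Y-X_*}))$ and $\varrho\,e_1\in\mathrm{cvx}(\mathscr{F}(A_Y,A_1,\dots,A_k))$ from Corollary~\ref{cor:cond4} and Theorem~\ref{thm:cond5}, use convexity to drop the convex hull, and conclude via conditions (2) or (3) of Theorem~\ref{thm:minmax}. No issues.
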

\begin{proof}
If $X_*$ is a spectral approximation of $Y$, then the equivalent conditions \eqref{eqn:cond4} and \eqref{eqn:cond5}  show that
$$ 
0\in\mathrm{cvx}(\mathscr{F}(A_1,\dots,A_k;\Sigma_{Y-X_{*}})) 
\quad\mbox{and}\quad  
\varrho\, e_1 \in\mathrm{cvx}(\mathscr{F}(A_Y,A_1,\dots,A_k)),
$$
respectively. If $\mathscr{F}(A_1,\dots,A_k;\Sigma_{Y-X_{*}})$
or 
$\mathscr{F}(A_Y,A_1,\dots,A_k)$
is convex, then  Theorem~\ref{thm:minmax} implies that equality holds in \eqref{eqn:maxmin-minmax}.
\end{proof}

From this result we obtain the following sufficient conditions for equality in~\eqref{eqn:maxmin-minmax}.

\begin{cor}\label{cor:cases}
In the notation of Theorem~\ref{thm:minmax}, suppose that  $X_*\in\X$ is a spectral approximation of $Y\in \F^{n\times n} \setminus\X$. We have equality in \eqref{eqn:maxmin-minmax} if one of the following conditions holds:
\begin{itemize}
\item[$(1)$] $k=1$.
\item[$(2)$] $\mathrm{dim}(\Sigma_{Y-X_*})=1$.
\item[$(3)$] $k=2$, $\F=\R$, and $\mathrm{dim}(\Sigma_{Y-X_*})\geq 3$.
\end{itemize}
\end{cor}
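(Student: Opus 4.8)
The plan is to reduce, in each of the three cases, the equality in~\eqref{eqn:maxmin-minmax} to the convexity of the set $\mathscr{F}(A_1,\dots,A_k;\Sigma_{Y-X_*})\subset\F^k$, and then to invoke the preceding corollary, which guarantees equality in~\eqref{eqn:maxmin-minmax} whenever this set is convex. Thus it suffices to verify, under each of the hypotheses $(1)$, $(2)$, $(3)$, that $\mathscr{F}(A_1,\dots,A_k;\Sigma_{Y-X_*})$ is convex, and the whole proof is organized around establishing this convexity case by case.

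For case $(1)$, with $k=1$ the set lies in $\F$. If $\F=\C$ it is exactly the numerical range of the compression of $A_1$ to the subspace $\Sigma_{Y-X_*}$, which is convex by the Toeplitz--Hausdorff theorem. If $\F=\R$, then $v^{T}A_1 v=v^{T}\tfrac12(A_1+A_1^{T})v$ for real $v$, so the set is the image of the real unit sphere of $\Sigma_{Y-X_*}$ under a quadratic form associated with a symmetric matrix; by connectedness and compactness of that sphere its image is a (possibly degenerate) closed interval, hence convex. For case $(2)$, write $\Sigma_{Y-X_*}=\mathrm{span}\{w\}$ with $\|w\|_2=1$; every unit vector in this subspace has the form $\lambda w$ with $|\lambda|=1$, and $(\lambda w)^{H}A_j(\lambda w)=w^{H}A_j w$ for each $j$. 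Hence $\mathscr{F}(A_1,\dots,A_k;\Sigma_{Y-X_*})$ reduces to the single point with entries $w^{H}A_j w$ and is trivially convex.

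The substantive case is $(3)$, where $k=2$, $\F=\R$, and $m\equiv\dim(\Sigma_{Y-X_*})\geq 3$. Fixing an orthonormal basis of $\Sigma_{Y-X_*}$ and replacing $A_1,A_2$ by their symmetric parts (which leaves $v^{T}A_i v$ unchanged for real $v$), the set $\mathscr{F}(A_1,A_2;\Sigma_{Y-X_*})$ is identified with the joint numerical range $\{(x^{T}S_1 x,\,x^{T}S_2 x):x\in\R^{m},\ \|x\|_2=1\}$ of two real symmetric $m\times m$ matrices $S_1,S_2$ with $m\geq 3$. This set is convex by Brickman's classical convexity theorem for the joint numerical range of two real symmetric matrices in dimension at least three. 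Thus in all three cases $\mathscr{F}(A_1,\dots,A_k;\Sigma_{Y-X_*})$ is convex, and the preceding corollary yields equality in~\eqref{eqn:maxmin-minmax}.

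I expect the main obstacle to be case $(3)$, which is essentially Brickman's theorem applied correctly. The two points requiring care are the passage to the compression onto $\Sigma_{Y-X_*}$, so that the relevant ambient dimension is $m\geq 3$ rather than $n$, and the replacement of $A_1,A_2$ by their symmetric parts, which is precisely what turns the object into the real joint numerical range to which Brickman's result applies; the hypothesis $m\geq 3$ is exactly the dimension threshold in that theorem, and its necessity for $k=2$ in the real case explains why dimension two is excluded. Cases $(1)$ and $(2)$ are elementary, the only external input being the Toeplitz--Hausdorff theorem in the complex subcase of $(1)$.
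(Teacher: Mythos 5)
Your proposal is correct and follows essentially the same route as the paper: in all three cases you establish convexity of $\mathscr{F}(A_1,\dots,A_k;\Sigma_{Y-X_*})$ (via compression to an orthonormal basis of $\Sigma_{Y-X_*}$, the field-of-values convexity for $k=1$, the single-point observation for $\dim(\Sigma_{Y-X_*})=1$, and passage to symmetric parts plus Brickman's theorem for $k=2$, $\F=\R$, $\dim\geq 3$) and then invoke the preceding corollary. No gaps.
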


\begin{proof}
(1) Let $V\in\F^{n\times\ell}$ be a matrix with orthonormal columns that form a basis of $\Sigma_{Y-X_{*}}$. Any unit Euclidean norm vector $v\in\Sigma_{Y-X_{*}}$ can be written in the form $v=V\xi$, where $\xi\in\F^{\ell}$ has unit Euclidean norm, and we obtain
\begin{align*}
\mathscr{F}(X_{1};\Sigma_{Y-X_*})&=\left\{v^H(Y-X_*)X_1v\,:\, v\in\Sigma_{Y-X_*},\,\|v\|_2=1\right\}\\
&=
\left\{ \xi^{H}V^{H}(Y-X_{*})^{H}X_{1}V\xi\, :\, \xi\in\F^\ell,\;\|\xi\|_2=1\right\}. 
\end{align*}
This is either the real (if $\F=\R$) or classical (if $\F=\C$) field of values of the matrix $V^{H}(Y-X_{*})^{H}X_{1}V\in\F^{\ell\times\ell}$, which in both cases is a convex set.

(2) Let $\mathrm{dim}(\Sigma_{Y-X_*})=1$, so that $\Sigma_{Y-X_*}=\mathrm{span}\{w\}$ for some unit Euclidean norm vector $w\in\F^n$. Any unit Euclidean norm vector $v\in\Sigma_{Y-X_*}$ is of the form $v=e^{i\theta}w$ (if $\F=\C$) or $v=\pm w$ (if $\F=\R$), so that 
$$\mathscr{F}(A_{1},\dots,A_k;\Sigma_{Y-X_*})$$
is a single point in $\F^k$ and hence convex.

(3)  In our notation, Brickman~\cite{Br1961} showed that for any $\ell\geq 3$ and any two symmetric matrices $S_1,S_2\in\R^{\ell\times \ell}$, the set $\mathscr{F}(S_1,S_2)$
is convex. Now let $X_*\in\X=\mathrm{span}\{X_1,X_2\}\subset\R^{n\times n}$ be a spectral approximation of $Y\in\R^{n\times n}$, and suppose that $\ell\equiv\mathrm{dim}(\Sigma_{Y-X_*})\geq 3$. If the columns of $V\in\R^{n\times \ell}$ form an orthonormal basis of  $\Sigma_{Y-X_*}$, then any unit Euclidean norm vector $v\in\Sigma_{Y-X_*}$ can be written as $v=V\xi$ for some unit Euclidean norm vector $\xi\in\R^3$, and 
\begin{align*}
\mathscr{F}(A_{1},A_2;\Sigma_{Y-X_*})
& =  \left\{ \begin{bmatrix}
v^{T}A_{1}v\\
v^{T}A_{2}v
\end{bmatrix}:v\in\Sigma_{Y-X_{*}},\ \|v\|_2=1\right\} \\
 & =  \left\{ \begin{bmatrix}
\xi^{T}B_{1}\xi\\
\xi^{T}B_{2}\xi
\end{bmatrix}:\xi\in\mathbb{R}^{\ell},\ \|\xi\|_2=1\right\}, 
\end{align*}
where $B_{1}\equiv V^{T}A_{1}V$ and $B_{2}\equiv V^{T}A_{2}V$ are real $\ell\times\ell$ matrices. These matrices can be written as $B_j=S_j+N_j$ with $S_j=\frac12 (B_j+B_j^T)$ and $N_j=\frac12 (B_j-B_j^T)$, $j=1,2$. We have $\zeta^TN_j \zeta=0$ for every $\zeta\in\R^\ell$, and therefore
\begin{align*}
\mathscr{F}(A_{1},A_2;\Sigma_{Y-X_*}) =  \left\{ \left[\begin{array}{c}
\xi^{T}S_{1}\xi\\
\xi^{T}S_{2}\xi
\end{array}\right]:\xi\in\mathbb{R}^{\ell},\ \|\xi\|_2=1\right\}
=\mathscr{F}(S_1,S_2),
\end{align*}
which is convex by Brickman's result, since $\ell\geq 3$ and $S_1,S_2\in\R^{\ell\times \ell}$ are symmetric.
\end{proof}

We have some remarks about the three sufficient conditions in Corollary~\ref{cor:cases}:

(1) For $Y=I$ and $\X=\mathrm{span}\{A\}$ the first condition in Corollary~\ref{cor:cases} shows that worst-case and ideal GMRES are equal in iteration $k=1$. This has previously been shown in~\cite[Theorem~1]{Jo1994}; see also~\cite[Theorem~2.5]{GrGu1994} for a closely related result formulated and proven for the space $\R^{n\times n}$.

(2) This condition is a generalization of a similar result for the ideal GMRES problem for $k=1$ in~\cite[Lemma~2.4]{GrGu1994}.

(3) By combining conditions (2) and (3) we see that the only unresolved case 
of whether the equality in \eqref{eqn:maxmin-minmax} holds 
for $k=2$ and $\F=\R$, is when $\mathrm{dim}(\Sigma_{Y-X_*})=2$. In this case
the set $\mathscr{F}(A_{1},A_2;\Sigma_{Y-X_*})$ is either a line segment (i.e., a convex set) or an ellipse. Suppose that $\mathscr{F}(A_{1},A_2;\Sigma_{Y-X_*})$ is an ellipse. From the equivalent condition \eqref{eqn:cond4} in Corollary~\ref{cor:cond4} we know that $0\in\mathrm{cvx}(\mathscr{F}(A_{1},A_2;\Sigma_{Y-X_*}))$. In order to prove that equality in \eqref{eqn:maxmin-minmax} holds also for $k=2$, $\F=\R$ and $\ell=2$, we thus need to prove that zero lies on the ellipse (and not in its interior).

\medskip
Finally, we will prove that equality in~\eqref{eqn:maxmin-minmax} always holds when we ``double'' the problem, i.e., when we consider the best approximation problem
$$\min_{X\in\mathcal{X}}\left\Vert \begin{bmatrix}
Y-X & 0\\
0 & Y-X
\end{bmatrix}\right\Vert _{2}=\min_{X\in\mathcal{X}}\left\Vert I_2 \otimes (Y-X)\right\Vert _{2}.$$
We have used such ``doubled'' problems in the context of worst-case and ideal GMRES also in~\cite[Section~6]{FabLieTic13}. If $X_*\in\X$ is a spectral approximation of $Y\in\F^{n\times n}\setminus\X$, then it is easy to see that
\begin{align*}
\|Y-X_*\|_2 &=\min_{X\in\mathcal{X}}\|Y-X\|_2=
\min_{X\in\mathcal{X}}\left\Vert I_2 \otimes (Y-X)\right\Vert _{2}\\
&=\min_{\substack{X\in\mathcal{X}\\ Z\in\mathcal{X}}}
\left\Vert \begin{bmatrix}
Y & 0\\
0 & Y
\end{bmatrix}-\begin{bmatrix}
X & 0\\
0 & Z
\end{bmatrix}\right\Vert _{2}.
\end{align*}
Using the previous results, we now prove the following theorem.

\begin{thm}
Consider a $k$-dimensional subspace $\mathcal{X}=\mathrm{span}\{X_{1},\dots,X_{k}\}\subset \F^{n\times n}$, and let $X_{*}\in\mathcal{X}$ be a spectral approximation of $Y\in\F^{n\times n}\backslash\mathcal{X}$. Then
\begin{equation}\label{thm:minmaxB}
\max_{\substack{v\in\F^{2n}\\ \|w\|_2=1}}
\min_{X\in\mathcal{X}}\left\Vert \left(I_2 \otimes (Y-X)\right)w\right\Vert _{2}=\min_{X\in\mathcal{X}}\left\Vert I_2 \otimes (Y-X)\right\Vert _{2}.
\end{equation}
\end{thm}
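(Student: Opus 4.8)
The plan is to recognize the doubled problem as an ordinary spectral approximation problem in $\F^{2n\times 2n}$ and then feed it into Theorem~\ref{thm:minmax}. Set $\tilde Y\equiv I_2\otimes Y$, let $\tilde\X\equiv\{I_2\otimes X:X\in\X\}$ (a $k$-dimensional subspace of $\F^{2n\times 2n}$), and put $\tilde X_*\equiv I_2\otimes X_*$. Since $\|I_2\otimes(Y-X)\|_2=\|Y-X\|_2$ for every $X$, the matrix $\tilde X_*$ is a spectral approximation of $\tilde Y$ from $\tilde\X$, and the right-hand side of \eqref{thm:minmaxB} equals $\|Y-X_*\|_2$. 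Thus \eqref{thm:minmaxB} is exactly the statement that equality holds in \eqref{eqn:maxmin-minmax} for this doubled problem, and by Theorem~\ref{thm:minmax} applied to $\tilde Y,\tilde\X,\tilde X_*$ it suffices to verify condition~(2) there, namely $0\in\mathscr{F}(\tilde A_1,\dots,\tilde A_k;\Sigma_{\tilde Y-\tilde X_*})$, where $\tilde A_i=(\tilde Y-\tilde X_*)^H\tilde X_i=I_2\otimes A_i$ with $A_i$ as in \eqref{eqn:Ai}.

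The next step is to make the two ingredients of this condition explicit. Writing $M\equiv Y-X_*$, one checks from $\tilde Y-\tilde X_*=I_2\otimes M$ that a vector $w=\left[\begin{smallmatrix}w_1\\ w_2\end{smallmatrix}\right]$ is a maximal right singular vector of $I_2\otimes M$ precisely when both $w_1,w_2\in\Sigma_{Y-X_*}$; hence $\Sigma_{\tilde Y-\tilde X_*}=\F^2\otimes\Sigma_{Y-X_*}$. For such a $w$ we have $w^H\tilde A_iw=w_1^HA_iw_1+w_2^HA_iw_2$, so condition~(2) for the doubled problem reads: there exist $w_1,w_2\in\Sigma_{Y-X_*}$ with $\|w_1\|_2^2+\|w_2\|_2^2=1$ and
$$w_1^HA_iw_1+w_2^HA_iw_2=0,\qquad i=1,\dots,k.$$
Equivalently, writing $w_a=\sqrt{\alpha_a}\,p_a$ with unit vectors $p_a\in\Sigma_{Y-X_*}$, the origin must be a convex combination of the \emph{two} points $(p_1^HA_ip_1)_i$ and $(p_2^HA_ip_2)_i$ of $\mathscr{F}(A_1,\dots,A_k;\Sigma_{Y-X_*})$.

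What the hypothesis supplies is weaker: because $X_*$ is a spectral approximation of $Y$, Corollary~\ref{cor:cond4} gives $0\in\mathrm{cvx}\big(\mathscr{F}(A_1,\dots,A_k;\Sigma_{Y-X_*})\big)$, i.e.\ $0=\sum_{j=1}^{\ell}\omega_j\,(v_j^HA_iv_j)_i$ for some $v_j\in\Sigma_{Y-X_*}$ and weights $\omega_j>0$ summing to one, with $\ell$ possibly as large as $k+1$ in the real and $2k+1$ in the complex case. The whole theorem therefore reduces to a single compression statement: \textbf{the zero vector, known to lie in the convex hull of the restricted joint field, must already be representable using only two of its points}; in density-matrix language, the nonempty spectrahedron $\{\tau\succeq 0:\ \mathrm{trace}(\tau)=1,\ \mathrm{trace}(A_i\tau)=0,\ \mathrm{supp}(\tau)\subseteq\Sigma_{Y-X_*}\}$ must contain an element of rank at most two. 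This compression is the crux and the main obstacle I expect: a naive Carathéodory argument only bounds the number of points by $\ell$, and low-rank extreme-point results of Barvinok--Pataki type only force rank two for small $k$, so the special geometry of the field must be used. I would attack it by exploiting that $\mathscr{F}(A_1,\dots,A_k;\Sigma_{Y-X_*})$ is the continuous (quadratic) image of the unit sphere of $\Sigma_{Y-X_*}$, hence connected with $0$ in its convex hull, and try to produce a chord of the field through the origin, i.e.\ a pair of points on opposite rays from $0$ — the ``doubling'' being exactly what provides the two slots $w_1,w_2$ to host such a chord. One may alternatively run the same reduction through condition~(3) of Theorem~\ref{thm:minmax}, replacing $0\in\mathscr{F}(A_i;\Sigma_{Y-X_*})$ by $\varrho\,e_1\in\mathscr{F}(A_Y,A_1,\dots,A_k)$ and its doubled analogue, which avoids the singular-vector subspace at the price of carrying the extra coordinate coming from $A_Y$.
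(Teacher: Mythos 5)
Your reduction is the same one the paper uses: pass to the $2n\times 2n$ problem, note that $I_2\otimes X_*$ is a spectral approximation of $I_2\otimes Y$ from $\{I_2\otimes X: X\in\X\}$, identify $\Sigma_{I_2\otimes(Y-X_*)}$ and the matrices $I_2\otimes A_i$, and observe that condition (2) (equivalently (3)) of Theorem~\ref{thm:minmax} for the doubled problem amounts to writing the origin (resp.\ $\varrho\,e_1$) as a convex combination of at most \emph{two} points of $\mathscr{F}(A_1,\dots,A_k;\Sigma_{Y-X_*})$ (resp.\ of $\mathscr{F}(A_Y,A_1,\dots,A_k)$), the two blocks of $w$ supplying the two slots. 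All of this is correct. But you then stop at exactly the step that carries the whole theorem: you do not produce the two-point representation, only a representation with $\ell\le k+1$ (real) or $\ell\le 2k+1$ (complex) points coming from Corollary~\ref{cor:cond4}, and you yourself note that neither Carath\'eodory nor Barvinok--Pataki rank bounds close this gap for general $k$. A proposal that reduces the statement to an explicitly unproven ``compression'' claim is not a proof; moreover, your suggested repair (a chord of the field through the origin via connectedness) is only automatic in ambient dimension two, where connected Carath\'eodory already yields two points --- in $\R^{k}$ or $\R^{2k}$ for larger $k$, connectedness buys you $k$ (or $2k$) points, not two.

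For what it is worth, you have put your finger on precisely the point that the paper's own proof does not argue. The paper deduces from \eqref{eqn:cond5} that ``there are two unit vectors $x,y$ and a scalar $\alpha\in[0,1]$'' realizing $\varrho\,e_1$, i.e., it reads the convex-hull membership in \eqref{eqn:cond5} as a two-term combination, consistent with how the proof of Theorem~\ref{thm:cond5} writes \eqref{eqn:cond5aux}, but without a justification that two terms always suffice. Once that two-term representation is granted, the rest is immediate and identical to the end of your argument: stack $\sqrt{\alpha}x$ and $\sqrt{1-\alpha}y$ into $w\in\F^{2n}$ and invoke Theorem~\ref{thm:minmax}. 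So the difference between your attempt and the paper's proof is only that you refuse to assume the two-point representation while the paper assumes it silently; neither text contains an argument for it, and that is where a complete proof of \eqref{thm:minmaxB} must do its work (otherwise the construction only yields the analogous statement with $I_2$ replaced by $I_\ell$, which does not imply the $I_2$ case).
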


\begin{proof} 
Since $X_{*}$ is a spectral approximation of $Y$, we know by the equivalent characterization \eqref{eqn:cond5} that
$$
\varrho \, e_1 \in 
\mathrm{cvx} \left( \mathscr{F}(A_Y, A_1,\dots,A_k) \right).
$$
Hence, there are two unit Euclidean norm vectors $x\in\mathbb{F}^{n}$
and $y\in\mathbb{F}^{n}$, and a scalar $\alpha\in[0,1]$ such that 
$$
\alpha\begin{bmatrix}
x^{H}A_Yx\\
x^{H}A_1x\\
\vdots\\
x^{H}A_{k}x
\end{bmatrix}+(1-\alpha)\begin{bmatrix}
y^{H}A_Yy\\
y^{H}A_{1}y\\
\vdots\\
y^{H}A_{k}y
\end{bmatrix}=\begin{bmatrix}
\varrho \\
0\\
\vdots\\
0
\end{bmatrix},
$$
which is equivalent to 
$$
\begin{bmatrix}
w^{H} (I_2 \otimes A_Y)w\\
w^{H}(I_2 \otimes A_1)w\\
\vdots\\
w^{H}(I_2 \otimes A_{k})w
\end{bmatrix}
= \begin{bmatrix}
\varrho \\
0\\
\vdots\\
0
\end{bmatrix},
\quad
\mbox{where}
\quad
w \equiv \begin{bmatrix}
\sqrt{\alpha}x\\
\sqrt{1-\alpha}y
\end{bmatrix},\quad \|w\|_2=1.
$$
Therefore,
\[
\varrho\,e_1 \in \mathscr{F}(I_2 \otimes A_Y,I_2 \otimes A_1,\dots,I_2 \otimes A_{k}),
\]
and, using Theorem~\ref{thm:minmax}, we obtain the equality \eqref{thm:minmaxB}.
\end{proof}

\section{Conclusions}
In this work, we revisited the problem of characterizing spectral approximations of a given matrix. In the spirit of the work by Lau and Riha we approached this problem through the general framework of best approximation in normed linear spaces, using Singer’s theorem as a foundation, which we adapted to our specific setting. Our focus was on a geometric characterization of  spectral approximations based on the $k$-dimensional field of $k$ matrices. This approach offered a new perspective that may also prove useful for a better understanding the relationship between the corresponding min-max and max-min approximation problems. Throughout the paper we carefully examined how our results relate to existing work in the literature.

One of our main motivations for studying spectral approximations is the analysis of problems arising in the study of convergence of Krylov subspace methods, as discussed, for example, in \cite{FaJoKnMa1996,FabLieTic09,FabLieTic13,LiTi2009,LieTic14,TiLiFa2007}. In this paper, we considered more general problems that help place our specific questions within a broader mathematical context.


\begin{thebibliography}{10}

\bibitem{BiLi91}
{\sc P.~Binding and C.-K. Li}, {\em Joint ranges of {H}ermitian matrices and
  simultaneous diagonalization}, Linear Algebra Appl., 151 (1991),
  pp.~157--167.

\bibitem{Br1961}
{\sc L.~Brickman}, {\em On the field of values of a matrix}, Proc. Amer. Math.
  Soc., 12 (1961), pp.~61--66.

\bibitem{BurGre06}
{\sc J.~V. Burke and A.~Greenbaum}, {\em Characterizations of the polynomial
  numerical hull of degree {$k$}}, Linear Algebra Appl., 419 (2006),
  pp.~37--47.

\bibitem{CheGreTro25}
{\sc T.~Chen, A.~Greenbaum, and T.~Trogdon}, {\em G{MRES}, pseudospectra, and
  {C}rouzeix's conjecture for shifted and scaled {G}inibre matrices}, Math.
  Comp., 94 (2025), pp.~241--261.

\bibitem{ChoGre15}
{\sc D.~Choi and A.~Greenbaum}, {\em Roots of matrices in the study of {GMRES}
  convergence and {C}rouzeix's conjecture}, SIAM J. Matrix Anal. Appl., 36
  (2015), pp.~289--301.

\bibitem{Cro04}
{\sc M.~Crouzeix}, {\em Bounds for analytical functions of matrices}, Integral
  Equations Operator Theory, 48 (2004), pp.~461--477.

\bibitem{CroPal17}
{\sc M.~Crouzeix and C.~Palencia}, {\em The numerical range is a
  {$(1+\sqrt{2})$}-spectral set}, SIAM J. Matrix Anal. Appl., 38 (2017),
  pp.~649--655.

\bibitem{DavLiSal08}
{\sc C.~Davis, C.-K. Li, and A.~Salemi}, {\em Polynomial numerical hulls of
  matrices}, Linear Algebra Appl., 428 (2008), pp.~137--153.

\bibitem{FabGreMar03}
{\sc V.~Faber, A.~Greenbaum, and D.~E. Marshall}, {\em The polynomial numerical
  hulls of {J}ordan blocks and related matrices}, Linear Algebra Appl., 374
  (2003), pp.~231--246.

\bibitem{FaJoKnMa1996}
{\sc V.~Faber, W.~Joubert, E.~Knill, and T.~Manteuffel}, {\em Minimal residual
  method stronger than polynomial preconditioning}, SIAM J. Matrix Anal. Appl.,
  17 (1996), pp.~707--729.

\bibitem{FabLieTic09}
{\sc V.~Faber, J.~Liesen, and P.~Tich\'y}, {\em On {C}hebyshev polynomials of
  matrices}, SIAM J. Matrix Anal. Appl., 31 (2009/10), pp.~2205--2221.

\bibitem{FabLieTic13}
\leavevmode\vrule height 2pt depth -1.6pt width 23pt, {\em Properties of
  worst-case {GMRES}}, SIAM J. Matrix Anal. Appl., 34 (2013), pp.~1500--1519.

\bibitem{gb08}
{\sc M.~Grant and S.~Boyd}, {\em Graph implementations for nonsmooth convex
  programs}, in Recent Advances in Learning and Control, V.~Blondel, S.~Boyd,
  and H.~Kimura, eds., Lecture Notes in Control and Information Sciences,
  Springer-Verlag Limited, 2008, pp.~95--110.

\bibitem{Gre02}
{\sc A.~Greenbaum}, {\em Generalizations of the field of values useful in the
  study of polynomial functions of a matrix}, Linear Algebra Appl., 347 (2002),
  pp.~233--249.

\bibitem{Gre03}
\leavevmode\vrule height 2pt depth -1.6pt width 23pt, {\em Card shuffling and
  the polynomial numerical hull of degree {$k$}}, SIAM J. Sci. Comput., 25
  (2003), pp.~408--416.

\bibitem{Gr2023}
\leavevmode\vrule height 2pt depth -1.6pt width 23pt, {\em Open problems in the
  analysis of {K}rylov subspace methods}, Notices Amer. Math. Soc., 70 (2023),
  pp.~720--726.

\bibitem{GrGu1994}
{\sc A.~Greenbaum and L.~Gurvits}, {\em Max-min properties of matrix factor
  norms}, SIAM J. Sci. Comput., 15 (1994), pp.~348--358.

\bibitem{GrTr1994}
{\sc A.~Greenbaum and L.~N. Trefethen}, {\em G{MRES}/{CR} and
  {A}rnoldi/{L}anczos as matrix approximation problems}, SIAM J. Sci. Comput.,
  15 (1994), pp.~359--368.
\newblock Iterative methods in numerical linear algebra (Copper Mountain
  Resort, CO, 1992).

\bibitem{GutJonKar04}
{\sc E.~Gutkin, E.~A. Jonckheere, and M.~Karow}, {\em Convexity of the joint
  numerical range: topological and differential geometric viewpoints}, Linear
  Algebra Appl., 376 (2004), pp.~143--171.

\bibitem{B:HoJo1994}
{\sc R.~A. Horn and C.~R. Johnson}, {\em Topics in matrix analysis}, Cambridge
  University Press, Cambridge, 1994.
\newblock Corrected reprint of the 1991 original.

\bibitem{Jo1994}
{\sc W.~Joubert}, {\em A robust {GMRES}-based adaptive polynomial
  preconditioning algorithm for nonsymmetric linear systems}, SIAM J. Sci.
  Comput., 15 (1994), pp.~427--439.

\bibitem{LaRi1981}
{\sc K.~K. Lau and W.~O.~J. Riha}, {\em Characterization of best approximations
  in normed linear spaces of matrices by elements of finite-dimensional linear
  subspaces}, Linear Algebra Appl., 35 (1981), pp.~109--120.

\bibitem{LauLiPoo22}
{\sc P.-S. Lau, C.-K. Li, and Y.-T. Poon}, {\em The joint numerical range of
  commuting matrices}, Studia Math., 267 (2022), pp.~241--259.

\bibitem{LiPoo99}
{\sc C.-K. Li and Y.-T. Poon}, {\em Convexity of the joint numerical range},
  SIAM J. Matrix Anal. Appl., 21 (1999), pp.~668--678.

\bibitem{LieStr13}
{\sc J.~Liesen and Z.~Strako{\v s}}, {\em Krylov subspace methods}, Numerical
  Mathematics and Scientific Computation, Oxford University Press, Oxford,
  2013.
\newblock Principles and analysis.

\bibitem{LiTi2009}
{\sc J.~Liesen and P.~Tich{\'y}}, {\em On best approximations of polynomials in
  matrices in the matrix 2-norm}, SIAM J. Matrix Anal. Appl., 31 (2009),
  pp.~853--863.

\bibitem{LieTic14}
{\sc J.~Liesen and P.~Tich\'y}, {\em Max-min and min-max approximation problems
  for normal matrices revisited}, Electron. Trans. Numer. Anal., 41 (2014),
  pp.~159--166.

\bibitem{MulTom20}
{\sc V.~M\"uller and Y.~Tomilov}, {\em Joint numerical ranges: recent advances
  and applications minicourse by {V}. {M}\"uller and {Y}u. {T}omilov}, Concr.
  Oper., 7 (2020), pp.~133--154.
\newblock With assistance from Nikolitsa Chatzigiannakidou.

\bibitem{Nev93}
{\sc O.~Nevanlinna}, {\em Convergence of iterations for linear equations},
  Lectures in Mathematics ETH Z\"urich, Birkh\"auser Verlag, Basel, 1993.

\bibitem{Poo94}
{\sc Y.~T. Poon}, {\em On the convex hull of the multiform numerical range},
  Linear and Multilinear Algebra, 37 (1994), pp.~221--223.
\newblock Special Issue: The numerical range and numerical radius.

\bibitem{cvx}
{\sc C.~Research}, {\em {CVX}: Matlab software for disciplined convex
  programming, version 2.2}.
\newblock https://cvxr.com/cvx, 2024.

\bibitem{Sin70}
{\sc I.~Singer}, {\em Best approximation in normed linear spaces by elements of
  linear subspaces}, vol.~171 of Die Grundlehren der mathematischen
  Wissenschaften, Springer-Verlag, Berlin-Heidelberg-New York, 1970.

\bibitem{Sin74}
\leavevmode\vrule height 2pt depth -1.6pt width 23pt, {\em The theory of best
  approximation and functional analysis}, vol.~No. 13 of Conference Board of
  the Mathematical Sciences Regional Conference Series in Applied Mathematics,
  Society for Industrial and Applied Mathematics, Philadelphia, PA, 1974.

\bibitem{SeDuMi}
{\sc J.~F. Sturm}, {\em Using {S}e{D}u{M}i 1.02, a {MATLAB} toolbox for
  optimization over symmetric cones}, Optim. Methods Softw., 11/12 (1999),
  pp.~625--653.

\bibitem{TiLiFa2007}
{\sc P.~Tich{\'y}, J.~Liesen, and V.~Faber}, {\em On worst-case {GMRES}, ideal
  {GMRES}, and the polynomial numerical, hull of a {J}ordan block}, Electron.
  Trans. Numer. Anal., 26 (2007), pp.~453--473.

\bibitem{To1997}
{\sc K.-C. Toh}, {\em G{MRES} vs.\ ideal {GMRES}}, SIAM J. Matrix Anal. Appl.,
  18 (1997), pp.~30--36.

\bibitem{ToTr1998}
{\sc K.-C. Toh and L.~N. Trefethen}, {\em The {C}hebyshev polynomials of a
  matrix}, SIAM J. Matrix Anal. Appl., 20 (1999), pp.~400--419.

\bibitem{SDPT3}
{\sc R.~H. T\"ut\"unc\"u, K.~C. Toh, and M.~J. Todd}, {\em Solving
  semidefinite-quadratic-linear programs using {SDPT}3}, Math. Program., 95
  (2003), pp.~189--217.

\bibitem{VaBo1996}
{\sc L.~Vandenberghe and S.~Boyd}, {\em Semidefinite programming}, SIAM Rev.,
  38 (1996), pp.~49--95.

\bibitem{Zi1988}
{\sc K.~Zi{\c{e}}tak}, {\em On the characterization of the extremal points of
  the unit sphere of matrices}, Linear Algebra Appl., 106 (1988), pp.~57--75.

\bibitem{Zi1993}
\leavevmode\vrule height 2pt depth -1.6pt width 23pt, {\em Properties of linear
  approximations of matrices in the spectral norm}, Linear Algebra Appl., 183
  (1993), pp.~41--60.

\bibitem{Zi1995}
\leavevmode\vrule height 2pt depth -1.6pt width 23pt, {\em Strict approximation
  of matrices}, SIAM J. Matrix Anal. Appl., 16 (1995), pp.~232--234.

\bibitem{Zi1996}
{\sc K.~Zi{\c{e}}tak}, {\em On approximation problems with zero-trace
  matrices}, Linear Algebra Appl., 247 (1996), pp.~169--183.

\bibitem{Zi2017}
\leavevmode\vrule height 2pt depth -1.6pt width 23pt, {\em From the strict
  {C}hebyshev approximant of a vector to the strict spectral approximant of a
  matrix}, in \'Etudes op\'eratorielles, vol.~112 of Banach Center Publ.,
  Polish Acad. Sci. Inst. Math., Warsaw, 2017, pp.~307--346.

\end{thebibliography}

\end{document}